\theoremstyle{plain}
\theoremstyle{plain}
\newtheorem{theorem}{Theorem}[section]
\newtheorem{lemma}[theorem]{Lemma}
\newtheorem{proposition}[theorem]{Proposition}
\newtheorem{corollary}[theorem]{Corollary}
\newtheorem{question}[theorem]{Question}
\theoremstyle{definition}
\newtheorem{definition}[theorem]{Definition}
\newtheorem{remark}[theorem]{Remark}
\newtheorem{example}[theorem]{Example}
\newcommand{\LeftEqNo}{\let\veqno\@@leqno}
\numberwithin{equation}  {section}
\begin{document}

\vspace{-2cm}

\title[Ortho-isomorphisms of von Neumann algebras]{Ortho-isomorphisms of von Neumann algebras}

\author{Minghui Ma}
\address{Minghui Ma, School of Mathematical Sciences, Dalian University of Technology, Dalian, 116024, China}
\email{minghuima@dlut.edu.cn}

\author{Weijuan Shi}
\address{Weijuan Shi, School of Mathematics and Statistics, Shaanxi Normal University, Xi'an, 710062, China}
\email{shiweijuan1016@163.com}

\thanks{Minghui Ma was supported by the Postdoctoral Fellowship Program of CPSF (No. GZC20252022).
Weijuan Shi was supported by the National Natural Science Foundation of China (No. 12571139), the Natural Science Basic Research Plan in Shaanxi Province (No. 2023-JC-YB-050), Shaanxi Fundamental Science Research Project for Mathematics and Physics (No. 23JSQ038), and the Fundamental Research Funds for the Central Universities (No. GK202501008).}

\begin{abstract}
Suppose $\mathscr M$ and $\mathscr N$ are von Neumann algebras.
Two operators $A$ and $B$ in $\mathscr M$ are said to be orthogonal if $A^*B=0$, meaning their ranges are orthogonal.
Let $\varphi\colon\mathscr M\to\mathscr N$ be a map.
We say that $\varphi$ is an ortho-isomorphism if it is bijective and satisfies that $A^*B=0$ if and only if $\varphi(A)^*\varphi(B)=0$ for all $A,B\in\mathscr M$.
The map $\varphi$ is called ortho-additive if the additive relation $\varphi(A+B)=\varphi(A)+\varphi(B)$ holds for all $A,B\in \mathscr M$ with $A^*B=0$.
In this paper, we characterize the complete structure of ortho-additive ortho-isomorphisms between von Neumann algebras, which is an analogue of Dye's theorem and Uhlhorn's theorem.
\end{abstract}

\keywords{von Neumann algebras, ortho-isomorphisms, ortho-additive maps}
\subjclass[2010]{ 47B49  \ \   54E40}

\maketitle
\label{}
\baselineskip18pt

\section{Introduction}

There is a long history of investigating preserver problems concerning various relations and structures on operator algebras, and many remarkable results have been established over the past decades \cite{Dye,Kad51,Uhl,Wig}.
The celebrated Wigner's theorem \cite{Wig} states that every symmetry transformation on the ray space $\mathscr P_1(\mathscr H)$, the set of all rank-one projections on a complex Hilbert space $\mathscr H$, is induced by a unitary or an anti-unitary operator.
This result has been generalized by many authors \cite{BJM,BMS,Geh,Gyo,Mol01,Uhl,WWY}.
Among these studies, Uhlhorn \cite{Uhl} provided a pivotal generalization by characterizing the structure of transformations preserving orthogonality on $\mathscr P_1(\mathscr H)$ for $\dim\mathscr H\geqslant 3$.
Since projection lattices play an important role in the theory of von Neumann algebras, a lot of research work in this direction has been devoted to studying transformations and related problems between the projection lattices of von Neumann algebras.
Dye \cite{Dye} showed that every ortho-isomorphism between the projection lattices of von Neumann algebras without direct summand of type $\mathrm{I}_2$ can be extended to a Jordan *-isomorphism.
It is worth noting that, without assuming surjectivity, Qian et al. in \cite{QWWY} characterized the general form of transition probability preserving maps on $\mathscr P_c(\mathscr M)$, the set of all projections with a fixed trace $c$ in a semifinite factor $\mathscr M$, thus providing a non-surjective generalization of Wigner's theorem.
In \cite{Mor}, Mori investigated surjective isometries between the projection lattices of von Neumann algebras.
Afterwards, the authors in \cite{SSGM,SSM} gave a complete characterization
of ortho-preserving transformations on $\mathscr P_c(\mathscr M)$, thereby establishing a version of Uhlhorn's theorem in the setting of semifinite factors.
It is therefore natural to ask for a description of ortho-preserving transformations between the whole von Neumann algebras.
Notice that the orthogonality of operators is closely related to the *-product of operators.
In recent years, many scholars have studied transformations preserving certain algebraic relations on operator algebras, including the *-product, Lie product, *-Lie product, Jordan product, and so on \cite{CKLW,CH,KLW,LJ,Sem93,WJ}.

Throughout this paper, let $\mathscr M$ denote a von Neumann algebra.
Recall that the *-product of two operators $A$ and $B$ in $\mathscr M$ is defined as $A^*B$.
It is clear that $A$ and $B$ have zero *-product if and only if their range projections $\mathcal{R}(A)$ and $\mathcal{R}(B)$ are orthogonal.
For the sake of convenience, we adopt the following terminology:
two operators $A$ and $B$ in $\mathscr M$ are said to be orthogonal, written as $A\perp B$, if they have a zero *-product.
Let $\varphi\colon\mathscr M\to\mathscr N$ be a map between von Neumann algebras.
We say that $\varphi$ is ortho-additive if $\varphi(A+B)=\varphi(A)+\varphi(B)$ whenever $A\perp B$.
If $A\perp B$ implies that $\varphi(A)\perp\varphi(B)$, then $\varphi$ is called ortho-preserving.
Therefore, $\varphi$ being ortho-preserving is equivalent to saying that $\varphi$ preserves zero *-product.
In particular, $\varphi$ is called an ortho-isomorphism if it is bijective and ortho-preserving in both directions.
As a special case, if $\Phi\colon\mathscr M\to\mathscr M$ is a range-contractive map, i.e., $\mathcal{R}(\Phi(A))\leqslant\mathcal{R}(A)$, then $\Phi$ is ortho-preserving.
When considering transformations between operator algebras, linearity is usually imposed as an additional condition \cite{CH,Kad51,Sem08}.
In this paper, we will replace linearity with ortho-additivity, which is a much weaker condition.
We aim to describe the general form of ortho-additive ortho-isomorphisms between von Neumann algebras.
Using Dye's result \cite[Corollary of Theorem 1]{Dye}, we discover that there is a strong connection between ortho-isomorphisms and range-contractive maps.
Thus, we first establish the following result in \Cref{thm main1}.

\begin{theorem}\label{main1}
Suppose $\mathscr M$ is a von Neumann algebra without direct summand of type
$\mathrm{I}_1$ and $\Phi$ is an ortho-additive range-contractive map on $\mathscr{M}$.
Then $\Phi(A)=A\Phi(I)$ for every $A\in\mathscr M$.
\end{theorem}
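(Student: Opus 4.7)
I set $T := \Phi(I)$ and plan to prove $\Phi(A) = AT$ in three stages. The first stage handles projections and left covariance. For any projection $P$ the decomposition $I = P + (I-P)$ is orthogonal, so ortho-additivity gives $T = \Phi(P) + \Phi(I-P)$; range-contractivity forces each summand into the corresponding Peirce corner, and multiplying by $P$ on the left yields $\Phi(P) = PT$. The same argument applied to the orthogonal decomposition $A = PA + (I-P)A$ (orthogonal since $(PA)^*(I-P)A = A^*P(I-P)A = 0$) yields the left-covariance identity $\Phi(PA) = P\Phi(A)$ for every projection $P$ and every $A \in \mathscr M$.

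The heart of the proof is the second stage: $\Phi(\lambda P) = \lambda PT$ for every $\lambda \in \mathbb C$ and every projection $P$. I first fix a projection $E$ admitting a partial isometry $V \in \mathscr M$ with $V^*V = E$ and $VV^* \leqslant I-E$. For nonzero $\lambda$ and arbitrary $\mu$, the operators $\lambda E$ and $\mu V$ are orthogonal (because $EV = 0$), so ortho-additivity gives $\Phi(\lambda E + \mu V) = \Phi(\lambda E) + \Phi(\mu V)$. The range projection of $\lambda E + \mu V$ is the graph-like projection $R_{\lambda,\mu}$ onto $\{\lambda x + \mu V x : x \in E\mathscr H\}$, sitting inside $(E + VV^*)\mathscr H$. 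Combining range-contractivity on the sum with the fact that $\Phi(\lambda E) \in E\mathscr M$ and $\Phi(\mu V) \in (VV^*)\mathscr M$ forces the identity $\Phi(\mu V) = (\mu/\lambda)V\Phi(\lambda E)$. Specializing $\lambda = 1$ and using Stage 1 gives $\Phi(\mu V) = \mu VT$, and specializing $\mu = 1$ then applying $V^*$ from the left gives $\Phi(\lambda E) = \lambda ET$. The hypothesis that $\mathscr M$ has no direct summand of type $\mathrm I_1$ enters precisely here: it guarantees every projection $P \in \mathscr M$ decomposes as an orthogonal sum $P = \sum P_i$ of projections each admitting such a partial isometry, so ortho-additivity upgrades the formula to all $P$.

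In the third stage I handle general $A \in \mathscr M$ via the polar decomposition $A = U|A|$ together with an orthogonal spectral decomposition of $|A|$ through its spectral projections $P_i$: the pieces $\lambda_i UP_i$ are pairwise orthogonal since $(UP_i)^*(UP_j) = P_i U^*U P_j = P_i P_j = 0$ (using $P_i \leqslant \mathcal R(A^*) = U^*U$). Applying the partial-isometry generalization of the Stage 2 formula to each piece and summing via ortho-additivity gives $\Phi(A) = AT$ whenever $|A|$ has finite spectrum. The general case is reduced to this one by combining further orthogonal spectral decompositions with the left-covariance $\Phi(PA) = P\Phi(A)$, extracting the value of $\Phi(A)$ from the already-established cases.

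The main obstacle is the graph-projection argument in Stage 2, where the no-type-$\mathrm I_1$ hypothesis is used essentially to produce the partial isometries $V$ that make the range-contractivity constraint sufficiently restrictive. A secondary difficulty is the absence of any continuity assumption on $\Phi$: the extension in Stage 3 must proceed through genuine finite orthogonal decompositions rather than approximation, which forces careful bookkeeping of the spectral/polar data to ensure each sub-piece has the orthogonal initial/final structure required to invoke Stage 2.
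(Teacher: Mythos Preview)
Your Stage~1 is correct and matches the paper's Lemma~3.1. Your Stage~2 graph-projection argument is also correct: it is the scalar special case of the paper's Lemma~2.3 (where the paper allows an operator $S\in P\mathscr M P$ rather than a scalar), and it does yield $\Phi(\lambda E)=\lambda ET$ and $\Phi(\mu V)=\mu VT$ for projections $E\preceq I-E$ and partial isometries $V$ with $V^*V\perp VV^*$.

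The real gap is Stage~3. Your polar/spectral decomposition gives pieces $\lambda_i UP_i$, and you need $\Phi(\lambda_i UP_i)=\lambda_i UP_iT$. But $UP_i$ is a partial isometry whose initial and final projections are \emph{not} in general orthogonal, so your Stage~2 formula does not apply. You say you will ``ensure each sub-piece has the orthogonal initial/final structure'', but this cannot be done by further orthogonal splitting alone: take $A$ unitary, so $|A|=I$ and the only piece is $A$ itself. Writing $A=\sum_k Q_kA$ with $Q_k\preceq I-Q_k$ gives partial isometries $Q_kA$ with final projection $Q_k$ and initial projection $A^*Q_kA$; requiring these to be orthogonal forces $Q_kAQ_k=0$, i.e.\ $A$ block off-diagonal for the decomposition, which fails for generic unitaries. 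This is exactly the difficulty the paper isolates and resolves in Proposition~3.4, via a nontrivial type-by-type analysis (properly infinite, $\mathrm{II}_1$, finite type~$\mathrm I$) to prove $\Phi(W)=W\Phi(I)$ whenever $WW^*\preceq I-WW^*$. The paper then avoids polar/spectral decomposition entirely: it first upgrades the graph argument to the operator-valued identity $\Phi(BA)=B\Phi(A)$ for $B\in P\mathscr M P$ with $P\preceq I-P$ (Lemma~3.3), and combines this with Proposition~3.4 through the trick $PA=(PAW^*)W$, where $W$ is the partial isometry linking $\mathcal R((PA)^*)$ to $\mathcal R(PA)$.

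A second, independent gap: your passage from finite spectrum to general $A$ is not explained. Without continuity, left covariance $\Phi(PA)=P\Phi(A)$ alone does not let you ``extract'' $\Phi(A)$ from values on finite-spectrum operators; if $|A|$ has continuous spectrum, no finite orthogonal decomposition of $A$ has pieces of the form $\lambda W$. The paper's route sidesteps this issue because the identity $\Phi(BA)=B\Phi(A)$ already holds for arbitrary $B$ in the corner, so no spectral approximation is needed.
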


As an application of \Cref{main1} and Dye's result, we further study ortho-additive ortho-isomorphisms between von Neumann algebras.
Since Dye's result applies only to von Neumann algebras without direct summand of type $\mathrm{I}_2$, the case of type $\mathrm{I}_2$ von Neumann algebras is considered separately in \Cref{prop type-I2}.
The following theorem (see \Cref{thm main2}) is the main result in this paper.

\begin{theorem}\label{main2}
Suppose $\mathscr M$ and $\mathscr N$ are von Neumann algebras such that $\mathscr M$ has no direct summand of type $\mathrm{I}_1$, and $\varphi\colon\mathscr M\to\mathscr N$ is an ortho-additive ortho-isomorphism.
Then there exists a map $\Phi\colon\mathscr M\to\mathscr N$ such that
\begin{equation*}
  \varphi(A)=\Phi(A)\varphi(I)\quad\text{for all}~A\in\mathscr M,
\end{equation*}
and $\Phi$ is the direct sum of a *-isomorphism and a conjugate *-isomorphism.
\end{theorem}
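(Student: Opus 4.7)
The plan is to reduce the statement to \Cref{main1} by extracting, from $\varphi$'s action on projections, a Jordan *-isomorphism $\Phi\colon\mathscr M\to\mathscr N$ that carries all the algebraic content, leaving only a right multiplication by $\varphi(I)$. The strategy has three stages: build a projection-lattice ortho-isomorphism $\alpha$ from $\varphi$; extend $\alpha$ to $\Phi$ via Dye's theorem; and then apply \Cref{main1} to the composition $\Phi^{-1}\circ\varphi$.

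First I would collect two preliminaries: $\varphi(0)=0$ comes from $0\perp A$ and ortho-additivity, and $\mathcal R(\varphi(I))=I_{\mathscr N}$ because any $C\in\mathscr N$ with $\varphi(I)^*C=0$ satisfies $C=\varphi(B)$ with $I\perp B$, forcing $B=0$ (hence $C=0$). For a projection $P$, ortho-additivity applied to $I=P+(I-P)$ gives $\varphi(I)=\varphi(P)+\varphi(I-P)$ with $\varphi(P)^*\varphi(I-P)=0$. Setting $\alpha(P):=\mathcal R(\varphi(P))$ and left-multiplying by $\alpha(P)$ yields the key identities
\[
    \varphi(P)=\alpha(P)\,\varphi(I),\qquad \alpha(I-P)=I_{\mathscr N}-\alpha(P).
\]
I would then verify that $\alpha\colon\mathcal P(\mathscr M)\to\mathcal P(\mathscr N)$ is an ortho-isomorphism of projection lattices. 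Orthogonality preservation in both directions is immediate; injectivity is forced by $\varphi(P)=\alpha(P)\varphi(I)$ and the injectivity of $\varphi$; and for surjectivity, given any $E\in\mathcal P(\mathscr N)$, setting $P:=\varphi^{-1}(E\varphi(I))$ and feeding $E\varphi(I)+(I-E)\varphi(I)=\varphi(I)$ (with orthogonal summands) through the ortho-additive $\varphi^{-1}$ produces $P+P'=I$ and $P^*P'=0$, from which $P^*=P$ and $P^2=P$ drop out and $\alpha(P)=\mathcal R(E\varphi(I))=E$.

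Assuming $\mathscr M$ has no type $\mathrm{I}_2$ summand (the $\mathrm{I}_2$ part is handled separately by \Cref{prop type-I2} and reassembled along central projections), I would invoke Dye's theorem to extend $\alpha$ to a Jordan *-isomorphism $\Phi=\Phi_1\oplus\Phi_2$ with $\Phi_1$ a *-isomorphism and $\Phi_2$ a conjugate *-isomorphism, both pieces multiplicative, so that $\mathcal R(\Phi(A))=\Phi(\mathcal R(A))$. The auxiliary map $\tilde\varphi:=\Phi^{-1}\circ\varphi\colon\mathscr M\to\mathscr M$ is then ortho-additive (by additivity of $\Phi^{-1}$) and range-contractive because
\[
    \mathcal R(\tilde\varphi(A))=\Phi^{-1}(\mathcal R(\varphi(A)))=\Phi^{-1}(\alpha(\mathcal R(A)))=\mathcal R(A).
\]
\Cref{main1} delivers $\tilde\varphi(A)=A\,\tilde\varphi(I)$, and multiplicativity of $\Phi_1$ and $\Phi_2$---with the cross terms $\Phi_i(A)\Phi_j(\tilde\varphi(I))$ for $i\neq j$ vanishing because the ranges of $\Phi_1$ and $\Phi_2$ lie in orthogonal central summands of $\mathscr N$---yields $\varphi(A)=\Phi(A\,\tilde\varphi(I))=\Phi(A)\,\Phi(\tilde\varphi(I))=\Phi(A)\,\varphi(I)$.

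The principal technical point I anticipate is choosing the correct presentation of the Jordan *-isomorphism. The classical output of Dye's theorem decomposes as a *-isomorphism plus a *-anti-isomorphism, and the latter would fail $\Phi(AT)=\Phi(A)\Phi(T)$ because of its order reversal on products; one must therefore replace the *-anti part $\Psi_2$ by the conjugate-linear multiplicative map $\Phi_2(A):=\Psi_2(A^*)$, which agrees with $\Psi_2$ on self-adjoint projections and thus still extends $\alpha$. The mostly administrative but indispensable secondary point is gluing in the type $\mathrm{I}_2$ direct summand via \Cref{prop type-I2}.
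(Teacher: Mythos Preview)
Your proposal is correct and follows essentially the same route as the paper: build a projection ortho-isomorphism from $\varphi$, extend it via Dye's theorem (after splitting off the type~$\mathrm{I}_2$ summand, handled by \Cref{prop type-I2}), and then apply \Cref{main1} to $\Phi^{-1}\circ\varphi$; the paper carries this out in \Cref{lem theta}, \Cref{prop ortho-isomorphism}, \Cref{prop no-type-I2}, and \Cref{thm main2}. The only noteworthy difference is that the paper establishes the projection ortho-isomorphism $\theta(\mathcal R(A))=\mathcal R(\varphi(A))$ directly from the two-sided orthogonality preservation (\Cref{lem theta}), without invoking ortho-additivity of $\varphi^{-1}$, whereas you deduce surjectivity of $\alpha$ by pushing $E\varphi(I)+(I-E)\varphi(I)$ through $\varphi^{-1}$; both arguments are valid, and your identity $\varphi(P)=\alpha(P)\varphi(I)$ is a pleasant by-product that the paper only obtains later.
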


In both \Cref{main1} and \Cref{main2}, the assumption that $\mathscr M$ has no direct summand of type $\mathrm{I}_1$ is essential.
Otherwise, the next example demonstrates that the conclusions may fail.

\begin{example}\label{eg strange}
Let $f$ be a homeomorphism from $\mathbb{C}$ onto $\mathbb{C}$ such that $f(0)=0$.
For example, if we take $f(re^{i\theta})=r^2e^{i\theta}$ in polar coordinates, then $f$ is a homeomorphism, whose inverse is given by $f^{-1}(re^{i\theta})=\sqrt{r}e^{i\theta}$.
Let $\mathscr M$ be an abelian von Neumann algebra.
Then we can define an ortho-additive range-contractive map by
\begin{equation*}
  \Phi\colon\mathscr M\to\mathscr M,\quad \Phi(A)=f(A).
\end{equation*}
Moreover, $\Phi$ is an ortho-isomorphism.
In the special case $\mathscr M=\mathbb{C}$, the map $\lambda\mapsto f(\lambda)$ is an ortho-additive range-contractive ortho-isomorphism for any bijection $f$ on $\mathbb{C}$ with $f(0)=0$.
\end{example}

In a $C^*$-algebra, we can similarly define the orthogonal relation $A\perp B$ by $A^*B=0$.
Thus, it is reasonable to consider ortho-additive ortho-isomorphisms between $C^*$-algebras.
Recall that a UHF $C^*$-algebra is the inductive limit of a sequence of finite-dimensional full matrix algebras \cite[Proposition 10.4.18]{KR2}.
We would like to know whether a conclusion similar to \Cref{main2} holds for UHF $C^*$-algebras.

\begin{question}\label{que UHF}
Suppose $\mathscr M$ and $\mathscr N$ are UHF $C^*$-algebras, and $\varphi\colon\mathscr M\to\mathscr N$ is an ortho-additive ortho-isomorphism.
How to describe the structure of $\varphi$?
\end{question}

This paper is organized as follows.
In \Cref{sec preliminaries}, we provide some definitions, notation, and lemmas that will be used in later sections.
In \Cref{sec range-contractive}, we obtain the general form of ortho-additive range-contractive maps on von Neumann algebras in \Cref{thm main1}.
In \Cref{sec ortho-isomorphism}, we study ortho-additive ortho-isomorphisms between von Neumann algebras and prove our main result in \Cref{thm main2}.

\section{Preliminaries}\label{sec preliminaries}

Let $\mathscr H$ be a complex Hilbert space (not necessarily separable) and $\mathscr{B(H)}$ the algebra of all bounded linear operators on $\mathscr H$.
A {\em von Neumann algebra} is a self-adjoint unital subalgebra of $\mathscr{B(H)}$ that is closed in the weak-operator topology.
By the type decomposition theorem (see \cite[Theorem 6.5.2]{KR2}), every von Neumann algebra is uniquely decomposable into the direct sum of those of type $\mathrm{I}_n$, type $\mathrm{I}_\infty$, type $\mathrm{II}_1$, type $\mathrm{II}_\infty$, and type $\mathrm{III}$.
For more details on the theory of von Neumann algebras, the reader is referred to \cite{KR1,KR2}.

Throughout this paper, let $\mathscr M$ be a von Neumann algebra.
For every operator $A$ in $\mathscr M$, its {\em range projection} is denoted by $\mathcal{R}(A)$.
Two operators $A$ and $B$ in $\mathscr M$ are called {\em orthogonal}, denoted by $A\perp B$, if $A^*B=0$.
It is clear that $A$ and $B$ are orthogonal if and only if their range projections are orthogonal, i.e., $\mathcal{R}(A)\perp\mathcal{R}(B)$.

\begin{definition}\label{def ortho}
Suppose $\mathscr M$ and $\mathscr N$ are von Neumann algebras, and $\varphi\colon\mathscr M\to\mathscr N$ is a map.
\begin{enumerate}
\item [(i)] $\varphi$ is called {\em ortho-additive} if for all $A,B\in\mathscr M$ with $A\perp B$, we have
\begin{equation*}
  \varphi(A+B)=\varphi(A)+\varphi(B).
\end{equation*}

\item [(ii)] $\varphi$ is called {\em ortho-preserving} if for all $A,B\in\mathscr M$ with $A\perp B$, we have
\begin{equation*}
  \varphi(A)\perp\varphi(B).
\end{equation*}

\item [(iii)] $\varphi$ is called an {\em ortho-isomorphism} if $\varphi$ is bijective and both $\varphi,\varphi^{-1}$ are ortho-preserving, i.e., for all $A,B\in\mathscr M$, we have
\begin{equation*}
    A\perp B\quad\text{if and only if}\quad\varphi(A)\perp\varphi(B).
\end{equation*}

\item [(iv)] A map $\Phi\colon\mathscr M\to\mathscr M$ is called {\em range-contractive} if $\mathcal{R}(\Phi(A))\leqslant\mathcal{R}(A)$ for all $A\in\mathscr M$.
\end{enumerate}
\end{definition}

The propose of this paper is to characterize the complete structure of ortho-additive ortho-isomorphisms between von Neumann algebras (see \Cref{thm main2}).

For every von Neumann algebra $\mathscr M$, its projection lattice $\mathscr P(\mathscr M)$ is the set of all projections in $\mathscr M$, i.e.,
\begin{equation*}
  \mathscr P(\mathscr M)=\{P\in \mathscr M\colon P^2=P=P^*\}.
\end{equation*}
Suppose $\mathscr M$ and $\mathscr N$ are von Neumann algebras.
A map $\theta\colon P(\mathscr M)\to\mathscr P(\mathscr N)$ is said to be {\em a projection ortho-isomorphism} it is bijective and preserves orthogonality in both directions, i.e., $P\perp Q$ if and only if $\theta(P)\perp\theta(Q)$ for all $P,Q\in\mathscr P(\mathscr M)$.
Let $\rho\colon\mathscr M\to\mathscr N$ be a map and $\rho_1(A)=\rho(A^*)$ for all $A\in\mathscr M$.
It is clear that $\rho$ is a *-anti-isomorphism if and only if $\rho_1$ is a conjugate *-isomorphism.
Note that $\rho_1(P)=\rho(P)$ for every $P\in\mathscr P(\mathscr M)$.
Therefore, the following theorem is a reformulation of \cite[Corollary of Theorem 1]{Dye}.

\begin{theorem}\label{thm Dye}
Suppose $\mathscr M$ and $\mathscr N$ are von Neumann algebras such that $\mathscr M$ has no direct summand of type $\mathrm{I}_2$, and $\theta\colon\mathscr P(\mathscr M)\to\mathscr P(\mathscr N)$ is a projection ortho-isomorphism.
Then $\theta$ can be extended to a map from $\mathscr M$ onto $\mathscr N$ which is the direct sum of a *-isomorphism and a conjugate *-isomorphism.
\end{theorem}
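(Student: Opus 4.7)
The plan is to deduce the statement directly from the corollary to Theorem~1 in \cite{Dye}, which is phrased in terms of *-anti-isomorphisms rather than conjugate *-isomorphisms, by means of the change of coordinates $\rho \leftrightarrow \rho_1$ with $\rho_1(A)=\rho(A^*)$ that has already been flagged in the paragraph preceding the statement. Since the substantive analytic work is already contained in \cite{Dye}, the argument reduces essentially to bookkeeping.

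First I would invoke Dye's original corollary: under the type-$\mathrm{I}_2$-free hypothesis on $\mathscr{M}$, the projection ortho-isomorphism $\theta$ extends to a surjection $\Psi\colon\mathscr{M}\to\mathscr{N}$ of the form $\Psi=\pi\oplus\rho$, where $\pi$ is a *-isomorphism between a central summand of $\mathscr{M}$ and its image in $\mathscr{N}$, and $\rho$ is a *-anti-isomorphism between the complementary summands.

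Next I would set $\rho_1(A)=\rho(A^*)$ and verify three identities: conjugate-linearity, from $\rho_1(\lambda A)=\rho(\overline{\lambda} A^*)=\overline{\lambda}\rho(A^*)=\overline{\lambda}\rho_1(A)$; multiplicativity, via $\rho_1(AB)=\rho(B^*A^*)=\rho(A^*)\rho(B^*)=\rho_1(A)\rho_1(B)$, using the anti-multiplicativity of $\rho$; and *-preservation, via $\rho_1(A^*)=\rho(A)=\rho(A^*)^*=\rho_1(A)^*$, using that $\rho$ itself is *-preserving. This exhibits $\rho_1$ as a conjugate *-isomorphism with the same underlying point map as $\rho$, and the reverse direction is symmetric, so the assignment $\rho\leftrightarrow\rho_1$ gives a bijective correspondence between *-anti-isomorphisms and conjugate *-isomorphisms.

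Finally, since $P^*=P$ for every projection $P$, we have $\rho_1(P)=\rho(P^*)=\rho(P)$, so replacing $\rho$ by $\rho_1$ leaves the action on $\mathscr{P}(\mathscr{M})$ untouched. The map $\pi\oplus\rho_1$ is therefore the desired direct sum of a *-isomorphism and a conjugate *-isomorphism extending $\theta$. I do not anticipate any genuine obstacle here: the only calculation required is the one-line algebraic check above, and the deeper content, namely Dye's extension of a projection ortho-isomorphism to all of $\mathscr{M}$, is imported directly from \cite{Dye}.
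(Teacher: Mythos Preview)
Your proposal is correct and matches the paper's approach exactly: the paper does not give a separate proof of this theorem but simply notes in the preceding paragraph that it is a reformulation of \cite[Corollary of Theorem~1]{Dye} via the correspondence $\rho\leftrightarrow\rho_1$ with $\rho_1(A)=\rho(A^*)$, which agrees with $\rho$ on projections. Your write-up just spells out in more detail the verification that $\rho_1$ is a conjugate *-isomorphism, which the paper dismisses with ``it is clear.''
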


The following lemma is taken from \cite[Lemma 1]{Dye}.

\begin{lemma}\label{lem Dye}
Suppose $\mathscr M$ and $\mathscr N$ are von Neumann algebras, and $\theta\colon\mathscr P(\mathscr M)\to\mathscr P(\mathscr N)$ is a projection ortho-isomorphism.
Then $\theta$ preserves the following entities:
$0$, $I$; the ortho-complement $I-P$ of $P$; order; and commutativity.
\end{lemma}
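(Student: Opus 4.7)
The plan is to derive the five preservation properties in a carefully chosen order that avoids circularity. First, I would pin down the distinguished projections $0$ and $I$. Since $0 \perp P$ for every $P \in \mathscr P(\mathscr M)$, ortho-preservation gives $\theta(0) \perp \theta(P)$; surjectivity of $\theta$ then provides some $P$ with $\theta(P) = I$, whence $\theta(0) \perp I$ forces $\theta(0) = 0$. Applying the same reasoning to $\theta^{-1}$ (itself an ortho-isomorphism) yields $\theta^{-1}(0) = 0$, and this combined with ortho-preservation in both directions forces $\theta(I) = I$: any $R \in \mathscr P(\mathscr N)$ with $R \perp \theta(I)$ satisfies $\theta^{-1}(R) \perp I$, so $\theta^{-1}(R) = 0$ and hence $R = 0$; in particular $I - \theta(I) = 0$.

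Next, I would establish order preservation before tackling ortho-complements, since the former can be proved using orthogonality alone. The key identity is
\begin{equation*}
  P \leqslant Q \iff \bigl(\forall R \in \mathscr P(\mathscr M),\ Q \perp R \implies P \perp R\bigr),
\end{equation*}
whose non-trivial direction follows by taking $R = I - Q$. Because $\theta$ is a bijection preserving orthogonality in both directions, this characterization transfers verbatim to $\mathscr P(\mathscr N)$, yielding $P \leqslant Q \iff \theta(P) \leqslant \theta(Q)$. Ortho-complement preservation then follows easily: the inequality $\theta(I-P) \leqslant I - \theta(P)$ is immediate from $(I-P) \perp P$, while for the reverse, surjectivity supplies $Q$ with $\theta(Q) = I - \theta(P)$; then $\theta(Q) \perp \theta(P)$ forces $Q \perp P$, i.e.\ $Q \leqslant I - P$, so order preservation gives $I - \theta(P) = \theta(Q) \leqslant \theta(I-P)$.

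Finally, commutativity is captured by the orthomodular identity: $PQ = QP$ if and only if $P = (P \wedge Q) \vee (P \wedge (I - Q))$. Since $\theta$ is now known to be an order isomorphism of the projection lattices, it automatically preserves meets and joins, and combined with ortho-complement preservation this identity transfers cleanly from the pair $\{P,Q\}$ to $\{\theta(P),\theta(Q)\}$, giving $\theta(P)\theta(Q) = \theta(Q)\theta(P)$. The main subtlety I foresee is the apparent circularity between order and ortho-complement preservation, each of which initially seems to require the other; the resolution is exactly the ordering of steps above, after which every remaining claim reduces to rewriting a desired lattice-theoretic property in purely orthogonality-based terms and transferring it along the bijection.
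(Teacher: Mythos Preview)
Your argument is correct in every step. The paper itself gives no proof of this lemma; it simply records the statement as being taken from \cite[Lemma~1]{Dye}. So there is nothing to compare against, and what you have written is a self-contained verification that the paper omits. The ordering you chose---first $0$ and $I$, then order via the characterization $P\leqslant Q \iff (\forall R,\ Q\perp R \Rightarrow P\perp R)$, then ortho-complements, then commutativity via $P=(P\wedge Q)\vee(P\wedge(I-Q))$---cleanly avoids the circularity you flagged, and each step goes through as stated. The only remark worth adding is that your claim ``$\theta$ automatically preserves meets and joins'' deserves one sentence of justification (an order isomorphism between complete lattices preserves all infima and suprema because it preserves the defining universal property), but this is entirely routine.
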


Recall that a projection $P$ in $\mathscr M$ is said to be an {\em abelian projection} if $P\mathscr MP$ is abelian.
It is clear that $P$ is abelian if and only if $P_1P_2=P_2P_1$ for all projections $P_1,P_2\leqslant P$.
The {\em central support} $C_P$ of $P$ is the minimal central projection in $\mathscr M$ such that $P\leqslant C_P$.
By the type decomposition theorem, for every positive integer $n$, there exists a maximal central projection $Z_n$ in $\mathscr M$ such that $\mathscr MZ_n$ is of type $\mathrm{I}_n$ or $Z_n=0$.
By \cite[Proposition 6.4.6(iii)]{KR2}, $Z_n$ is the maximal central projection in $\mathscr M$ such that there are mutually orthogonal abelian projections $\{P_j\}_{j=1}^n$ satisfying that $Z_n=\sum_{j=1}^{n}P_j$ and $Z_n=C_{P_j}$ for every $1\leqslant j\leqslant n$.

\begin{corollary}\label{cor Dye}
Suppose $\mathscr M$ and $\mathscr N$ are von Neumann algebras, and $\theta\colon\mathscr P(\mathscr M)\to\mathscr P(\mathscr N)$ is a projection ortho-isomorphism.
Then $\theta$ preserves the following entities:
the set of central projections; the central support $C_P$ of $P$; the set of abelian projections; the maximal central projection $Z_n$ such that $\mathscr MZ_n$ is of type $\mathrm{I}_n$ or $Z_n=0$.
\end{corollary}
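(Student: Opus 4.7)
The plan is to systematically exploit \Cref{lem Dye}, together with the observation that $\theta^{-1}$ is itself a projection ortho-isomorphism, so each preservation property transfers in both directions. Since both $\theta$ and $\theta^{-1}$ preserve order, $\theta$ is an order isomorphism between the projection lattices, and in particular preserves arbitrary joins and meets. A key preliminary remark I would record: for pairwise orthogonal $P_1,\dots,P_k\in\mathscr P(\mathscr M)$, the sum $P_1+\cdots+P_k$ coincides with the lattice join, and since the images remain pairwise orthogonal, one obtains
\begin{equation*}
  \theta(P_1+\cdots+P_k)=\theta(P_1)+\cdots+\theta(P_k).
\end{equation*}
This identity will be used repeatedly.

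First I would show preservation of central projections: $Z\in\mathscr P(\mathscr M)$ is central if and only if it commutes with every projection of $\mathscr M$, and since $\theta$ is surjective and commutativity-preserving, $\theta(Z)$ commutes with every projection of $\mathscr N$, hence with all of $\mathscr N$ (projections generate the algebra in the weak-operator topology); the converse uses $\theta^{-1}$. For the central support, the image $\theta(C_P)$ is a central projection dominating $\theta(P)$, so $C_{\theta(P)}\leqslant\theta(C_P)$; conversely $\theta^{-1}(C_{\theta(P)})$ is a central projection above $P$, hence dominates $C_P$, and applying $\theta$ yields the reverse inequality. Preservation of abelian projections is then immediate: $P$ is abelian iff any two subprojections of $P$ commute, and $\theta$ restricts to a commutativity-preserving order isomorphism from the sublattice of subprojections of $P$ onto that of $\theta(P)$.

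For the maximal central projection $Z_n$ with $\mathscr MZ_n$ of type $\mathrm{I}_n$, the main tool is the characterization from \cite[Proposition 6.4.6(iii)]{KR2} recorded in the preliminaries: $Z_n$ decomposes as $\sum_{j=1}^n P_j$ with the $P_j$ pairwise orthogonal abelian projections of central support $Z_n$. Applying $\theta$ and invoking the preservations already obtained, $\theta(Z_n)=\sum_{j=1}^n\theta(P_j)$ with the $\theta(P_j)$ pairwise orthogonal abelian projections in $\mathscr N$, each of central support $\theta(Z_n)$, so $\mathscr N\theta(Z_n)$ is of type $\mathrm{I}_n$, and consequently $\theta(Z_n)\leqslant Z_n'$, where $Z_n'$ is the analogous maximal projection of $\mathscr N$. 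A symmetric argument applied to $\theta^{-1}$ gives $\theta^{-1}(Z_n')\leqslant Z_n$, whence $\theta(Z_n)=Z_n'$.

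I do not anticipate a serious obstacle; the only mildly subtle ingredient is the preservation of finite sums of pairwise orthogonal projections, and this reduces at once to the identification of such sums with lattice joins together with order preservation. Each subsequent item is then a direct consequence of \Cref{lem Dye} and the projection-theoretic description of $Z_n$.
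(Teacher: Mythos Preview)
Your proposal is correct and follows essentially the same route as the paper's proof: both arguments deduce each preservation property directly from \Cref{lem Dye}, using the lattice-theoretic characterizations of central projections, central supports, abelian projections, and the decomposition description of $Z_n$. Your write-up is somewhat more explicit about invoking $\theta^{-1}$ to obtain the reverse inequalities (for $C_P$ and for the maximality of $Z_n$), whereas the paper compresses these into the phrases ``minimal central projection'' and ``has the same property,'' but the underlying logic is identical.
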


\begin{proof}
Since $\theta$ preserves commutativity by \Cref{lem Dye}, $\theta$ preserves the set of central projections.
Since $C_P$ is the minimal central projection such that $P\leqslant C_P$ and $\theta$ preserves order by \Cref{lem Dye}, $\theta$ preserves the central support $C_P$ of $P$, i.e., $\theta(C_P)=C_{\theta(P)}$.
Since $P$ is abelian if and only if $P_1P_2=P_2P_1$ for all projections $P_1,P_2\leqslant P$, we know that $\theta$ preserves the set of abelian projections by \Cref{lem Dye}.
If $\{P_j\}_{j=1}^n$ is a family of mutually orthogonal projections, then $\sum_{j=1}^{n}P_j$ is the minimal projection $P$ such that $P_j\leqslant P$ for every $1\leqslant j\leqslant n$.
It follows that $\theta(\sum_{j=1}^{n}P_j)=\sum_{j=1}^{n}\theta(P_j)$.
Since $Z_n$ is the maximal central projection such that there are mutually orthogonal abelian projections $\{P_j\}_{j=1}^n$ satisfying that $Z_n=\sum_{j=1}^{n}P_j$ and $Z_n=C_{P_j}$ for every $1\leqslant j\leqslant n$, the projection $\theta(Z_n)$ has the same property.
Hence $\theta(Z_n)$ is the maximal central projection such that $\mathscr N\theta(Z_n)$ is of type $\mathrm{I}_n$ or $\theta(Z_n)=0$.
\end{proof}

The following technique lemma will be used in next section.
Let $V$ be a partial isometry with initial projection $P$ and final projection $Q$.
It is clear that $PQ=0$ if and only if $V^2=0$.

\begin{lemma}\label{lem amazing}
Suppose $\mathscr M$ is a von Neumann algebra and $V$ is a partial isometry in $\mathscr M$ with orthogonal initial projection $P$ and final projection $Q$, i.e., $P=V^*V$, $Q=VV^*$, and $PQ=0$.
Let $S$ be a positive operator in $P\mathscr MP$ such that $0\leqslant S\leqslant(1-\varepsilon)P$ for some positive number $\varepsilon>0$, $C=\sqrt{P-S^2}$, and $R=C^2+CSV^*+VCS+VS^2V^*$ a projection in $\mathscr M$.
If $A$ and $B$ are operators in $\mathscr M$ such that
\begin{equation*}
  PA=A,\quad QB=B,\quad\text{and}\quad R(A+B)=A+B,
\end{equation*}
then $B=V(C^{-1}S)A$, where $C^{-1}$ is the inverse of $C$ in $P\mathscr MP$.
\end{lemma}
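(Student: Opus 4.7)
\bigskip
\noindent\textbf{Proof proposal for \Cref{lem amazing}.}

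The key structural observation is that $R$ factors as $R=WW^*$ where $W:=C+VS$. The plan is to first verify this factorization and use it to parametrise elements in the range of $R$, and then to impose the given orthogonality conditions on $A,B$ to pin $B$ down.

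First I would record the basic identities arising from the hypotheses: from $V^*V=P$ and $VV^*=Q$ with $PQ=0$ we have $V=QV=VP$ and $V^*=PV^*=V^*Q$; combined with $C\in P\mathscr{M}P$ and $S\in P\mathscr{M}P$ this yields $CV=0$, $V^*C=0$, and (because $C=\sqrt{P-S^2}$ is a norm-convergent power series in $S^2$) $CS=SC$. A short expansion then gives
\begin{equation*}
W^*W=(C+SV^*)(C+VS)=C^2+SV^*VS=C^2+S^2=P,
\end{equation*}
so $W$ is a partial isometry with source $P$, and
\begin{equation*}
WW^*=(C+VS)(C+SV^*)=C^2+CSV^*+VCS+VS^2V^*=R,
\end{equation*}
confirming both that $R$ is a projection and that $R$ has the desired factorization.

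Next, since $R(A+B)=A+B$ and $W$ is a partial isometry with $WW^*=R$ and $W^*W=P$, the element $A+B$ lies in the range of $W$, so $A+B=W W^*(A+B)$. I would then simplify $W^*(A+B)=(C+SV^*)(A+B)$ using the conditions $PA=A$ and $QB=B$: because $C\in P\mathscr{M}P$ and $PQ=0$, one has $CB=0$; because $V^*=V^*Q$ and $QP=0$, one has $V^*A=0$. Thus $W^*(A+B)=CA+SV^*B$, an element of $P\mathscr{M}$. Substituting back,
\begin{equation*}
A+B=(C+VS)(CA+SV^*B).
\end{equation*}
Multiplying this identity by $P$ on the left extracts the $P$-component, giving $A=C^2A+CSV^*B$, i.e.\ $CSV^*B=S^2A$; multiplying by $Q$ on the left extracts the $Q$-component, giving
\begin{equation*}
(Q-VS^2V^*)B=VCSA.
\end{equation*}

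The final step is to invert $Q-VS^2V^*$ in $Q\mathscr{M}Q$. The inequality $S\le(1-\varepsilon)P$ makes $C$ invertible in $P\mathscr{M}P$, and a direct check gives $(VCV^*)(VC^{-1}V^*)=VPV^*=Q$, so $VCV^*$ is invertible in $Q\mathscr{M}Q$ with inverse $VC^{-1}V^*$. Squaring, $(VCV^*)^2=VC^2V^*=Q-VS^2V^*$, so $(Q-VS^2V^*)^{-1}=VC^{-2}V^*$. Applying this to both sides of the $Q$-equation and using $V^*V=P$ yields
\begin{equation*}
B=VC^{-2}V^*\cdot VCSA=VC^{-2}PCSA=VC^{-1}SA,
\end{equation*}
which is the desired conclusion. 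I expect the main obstacle to be not the final computation but the initial recognition of the factorization $R=WW^*$, which is what turns the problem from a seemingly opaque system into a routine parametrisation of the range of $R$; once this is in hand, invertibility of $C$ in $P\mathscr{M}P$ (guaranteed by the strict bound $S\le(1-\varepsilon)P$) makes every subsequent step mechanical.
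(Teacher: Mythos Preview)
Your proof is correct and follows essentially the same route as the paper's: both hinge on the factorization $R=(C+VS)(C+SV^*)$ (which the paper uses implicitly via $PR=C(C+SV^*)$ and $QR=VS(C+SV^*)$) and the invertibility of $C$ in $P\mathscr{M}P$. The only cosmetic difference is in the final inversion: the paper reads off $CV^*B=SA$ from the common factor $(C+SV^*)(A+B)$ and inverts $C$ in $P\mathscr{M}P$, whereas you invert the transported operator $Q-VS^2V^*=VC^2V^*$ in $Q\mathscr{M}Q$---these are equivalent.
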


\begin{proof}
Since $PR=C^2+CSV^*$ and $QR=VCS+VS^2V^*$, we have
\begin{align*}
  A & =P(A+B)=PR(A+B)=C(C+SV^*)(A+B), \\
  B & =Q(A+B)=QR(A+B)=VS(C+SV^*)(A+B).
\end{align*}
It follows that $CV^*B=CS(C+SV^*)(A+B)=SA$.
Since $0\leqslant S\leqslant(1-\varepsilon)P$ for some $\varepsilon>0$ and $C=\sqrt{P-S^2}$, the operator $C$ is invertible in $P\mathscr MP$, whose inverse is denoted by $C^{-1}$.
Thus, $V^*B=(C^{-1}S)A$ and
\begin{equation*}
  B=QB=VV^*B=V(C^{-1}S)A.
\end{equation*}
This completes the proof.
\end{proof}

Suppose $\mathscr M$ is a von Neumann algebra acting on a complex Hilbert space $\mathscr H$.
Recall that a closed densely defined operator $A$ on $\mathscr H$ is {\em affiliated} with $\mathscr M$ if $U^*AU=A$ for each unitary operator $U$ commuting with $\mathscr M$.
Let $\eta\mathscr M$ be the set of all (unbounded) operators affiliated with $\mathscr M$.
If $\mathscr M$ is an abelian von Neumann algebra, then $\eta\mathscr M$ forms a commutative *-algebra by \cite[Theorem 5.6.15]{KR1}.
In general, by using center-valued trace, $\eta\mathscr M$ forms a *-algebra if $\mathscr M$ is a finite von Neumann algebra.
If $\mathscr N$ is a finite von Neumann algebra and $\mathscr M=M_n(\mathscr N)$, then $\mathscr M$ is a finite von Neumann algebra and $\eta\mathscr M=M_n(\eta\mathscr N)$.
The following lemma describes the structure of unital *-ring homomorphisms from $\mathscr A$ into $\eta\mathscr B$ for abelian von Neumann algebras.

\begin{lemma}\label{lem *ring-homomorphism}
Suppose $\mathscr A$ and $\mathscr B$ are abelian von Neumann algebras and $f\colon\mathscr A\to\eta\mathscr B$ is a unital *-ring homomorphism, i.e.,
\begin{equation*}
  f(1)=1,\quad f(a^*)=f(a)^*,\quad f(a+b)=f(a)+f(b),\quad\text{and}\quad f(ab)=f(a)f(b).
\end{equation*}
Then $f$ is a map from $\mathscr A$ into $\mathscr B$ and is the direct sum of a *-homomorphism and a conjugate *-homomorphism.
\end{lemma}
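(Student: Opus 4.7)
The plan is to extract a self-adjoint involution from the image of $i$ and split $f$ along its two spectral projections.

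First, I would study the restriction of $f$ to the scalar subring $\mathbb{C}\cdot 1\subset\mathscr{A}$. The element $f(i)\in\eta\mathscr{B}$ satisfies $f(i)^2=f(i^2)=f(-1)=-I$ and $f(i)^*=f(-i)=-f(i)$, so $T\colonequals -if(i)$ is self-adjoint with $T^2=I$. Since $\mathscr{B}$ is abelian, $\eta\mathscr{B}$ is a commutative *-algebra and spectral theory for affiliated self-adjoint operators applies; the spectrum of $T$ therefore sits in $\{-1,1\}$, which forces $T\in\mathscr{B}$ to be a self-adjoint unitary. Set $P=(I+T)/2$ and $Q=(I-T)/2$; then $P,Q$ are orthogonal projections in $\mathscr{B}$ with $P+Q=I$ and $f(i)=iP-iQ$. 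For real scalars I would argue as follows: $f$ is the identity on $\mathbb{Q}$ (it is a unital ring map), and for every positive real $r-q$ with $q\in\mathbb{Q}$ we have $r-q=s^2$ for some $s\in\mathbb{R}$, so $f(r-q)=f(s)^2\geqslant 0$ in $\eta\mathscr{B}$. Approximating from above and below by rationals gives $f(r)=rI$ for every $r\in\mathbb{R}$. Combining with $f(i)=iT$ yields the key formula
\begin{equation*}
  f(\lambda)=\lambda P+\bar\lambda Q\qquad(\lambda\in\mathbb{C}).
\end{equation*}

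Next, I would show $f$ actually lands in $\mathscr{B}$. For self-adjoint $a\in\mathscr{A}$ with $\|a\|\leqslant M$, continuous functional calculus gives $b=(M^2I-a^2)^{1/2}\in\mathscr{A}$, and then $f(a)^2+f(b)^2=M^2I$ in $\eta\mathscr{B}$ with both summands positive; hence $f(a)^2\leqslant M^2I$ and $f(a)\in\mathscr{B}$ with $\|f(a)\|\leqslant M$. Decomposing $a=x+iy$ into self-adjoint parts and using $f(a)=f(x)+iTf(y)$ settles the general case.

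Finally, I would define $g_1(a)=f(a)P$ and $g_2(a)=f(a)Q$, so that $f=g_1\oplus g_2$. Since $P,Q$ are central in the abelian $\mathscr{B}$ and $P^2=P$, $Q^2=Q$, $PQ=0$, multiplicativity and *-preservation of $g_1,g_2$ follow immediately from those of $f$. The formula for $f(\lambda)$ gives $g_1(\lambda a)=(\lambda P+\bar\lambda Q)f(a)P=\lambda g_1(a)$ and $g_2(\lambda a)=\bar\lambda g_2(a)$, so $g_1\colon\mathscr{A}\to\mathscr{B}P$ is a *-homomorphism and $g_2\colon\mathscr{A}\to\mathscr{B}Q$ is a conjugate *-homomorphism, as required.

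The main obstacle will lie in the first step: one starts with a map that is a priori only a ring homomorphism, with no complex linearity nor boundedness assumed. The whole argument hinges on extracting enough rigidity from the ring-theoretic identities $X^2=I$, $X^*=-X$, $X=Y^2$ by passing through the spectral calculus available in $\eta\mathscr{B}$ (where abelianness of $\mathscr{B}$ is essential); once the scalar formula $f(\lambda)=\lambda P+\bar\lambda Q$ and the boundedness of $f(a)$ are in hand, the decomposition is routine.
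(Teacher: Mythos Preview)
Your proposal is correct and follows essentially the same approach as the paper: both extract the projections from $f(i)^2=-1$, use positivity via squares to pin down the real scalars and show $f$ lands in $\mathscr{B}$, and then split $f$ along those projections. The only difference is the order of the steps---the paper first proves $\|f(a)\|\leqslant\|a\|$ directly from $f(a)^*f(a)=f(a^*a)\leqslant f(r1)=r1$ for rational $r\geqslant\|a\|^2$, obtains continuity (hence real-linearity) from this, and only then treats $f(i)$; you instead analyse $f(i)$ in $\eta\mathscr{B}$ first and defer boundedness---but the underlying ideas are identical.
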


\begin{proof}
Since $f(a+b)=f(a)+f(b)$, we know that $f(ra)=rf(a)$ for every rational number $r\in\mathbb{Q}$.
Since $f(a^*a)=f(a)^*f(a)\geqslant 0$ for every $a\in\mathscr A$, $f$ is a positive map.
For any rational number $r\geqslant\|a\|^2$, we have
\begin{equation*}
  f(a)^*f(a)=f(a^*a)\leqslant f(r1)=rf(1)=r1.
\end{equation*}
It follows that $\|f(a)\|\leqslant\|a\|$ and hence $f$ is a map from $\mathscr A$ into $\mathscr B$.
Moreover, for any $a,b\in\mathscr A$, we have
\begin{equation*}
  \|f(a)-f(b)\|=\|f(a-b)\|\leqslant\|a-b\|.
\end{equation*}
Thus, $f$ is a continuous map.
In particular, $f(ra)=rf(a)$ for every real number $r\in\mathbb{R}$.

Since $f(i1)^2=f((i1)^2)=f(-1)=-f(1)=-1$, there are projections $q_1$ and $q_2$ in $\mathscr B$ such that $1=q_1+q_2$ and $f(i1)=iq_1-iq_2$.
For $j=1,2$, let $\mathscr B_j=\mathscr Bq_j$ and
\begin{equation*}
  f_j\colon\mathscr A\to\mathscr B_j,\quad f_j(a)=f(a)q_j.
\end{equation*}
Then $\mathscr B=\mathscr B_1\oplus\mathscr B_2$ and $f=f_1\oplus f_2$.
It is clear that $f_1$ is a *-homomorphism and $f_2$ is a conjugate *-homomorphism.
This completes the proof.
\end{proof}

At the end of this section, we recall some relevant knowledge of von Neumann algebras that will be used in later sections.
Let $\mathscr M$ be a von Neumann algebra.
Two projections $P$ and $Q$ in $\mathscr M$ are said to be {\em (Murray-von Neumann) equivalent}, denoted by $P\sim Q$, when $V^*V=P$ and $VV^*=Q$ for some partial isometry $V$ in $\mathscr M$.
We say that $P$ is {\em weaker} than $Q$, written as $P\preceq Q$, if $P$ is equivalent to a subprojection of $Q$.
A projection $P$ is called {\em infinite} if $P\sim P_0<P$ for some projection $P_0$ in $\mathscr M$.
Otherwise, $P$ is said to be {\em finite}.
If $P$ is infinite and $PZ$ is either $0$ or infinite for each central projection $Z$ in $\mathscr M$, then $P$ is said to be {\em properly infinite}.
By the halving lemma (see \cite[Lemma 6.3.3]{KR2}), if $P$ is properly infinite, then there are projections $P_1$ and $P_2$ in $\mathscr M$ such that $P=P_1+P_2\sim P_1\sim P_2$.

\section{Ortho-additive range-contractive maps}\label{sec range-contractive}

In this section, let $\Phi$ be an ortho-additive range-contractive map on a von Neumann algebra $\mathscr M$ (see \Cref{def ortho}).
Due to \Cref{eg strange}, we determine the structure of $\Phi$ mainly under the assumption that $\mathscr M$ has no direct summand of type $\mathrm{I}_1$ in \Cref{thm main1}.
The following lemma is straightforward and will be used frequently.

\begin{lemma}\label{lem PA}
Let $\Phi$ be an ortho-additive range-contractive map on a von Neumann algebra $\mathscr{M}$.
Then for any projection $P$ and any operator $A$ in $\mathscr M$, we have $\Phi(PA)=P\Phi(A)$.
\end{lemma}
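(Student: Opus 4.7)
The plan is to decompose $A$ relative to the projection $P$ as $A = PA + (I-P)A$ and use the ortho-additivity together with the range-contractive property to separate the two pieces.

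First I would check that the summands are orthogonal in the sense defined in the paper: compute $(PA)^*(I-P)A = A^*P(I-P)A = 0$, so $PA \perp (I-P)A$. Ortho-additivity of $\Phi$ then gives the decomposition
\begin{equation*}
  \Phi(A) = \Phi(PA) + \Phi\bigl((I-P)A\bigr).
\end{equation*}

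Next I would use the range-contractive property to pin down where each of the two summands lives. Since $PA$ takes values in the range of $P$, we have $\mathcal{R}(PA) \leqslant P$, and hence
\begin{equation*}
  \mathcal{R}(\Phi(PA)) \leqslant \mathcal{R}(PA) \leqslant P,
\end{equation*}
which means $P\Phi(PA) = \Phi(PA)$. Symmetrically, $\mathcal{R}((I-P)A) \leqslant I-P$ gives $\mathcal{R}(\Phi((I-P)A)) \leqslant I-P$, and therefore $P\Phi((I-P)A) = 0$. Multiplying the decomposition of $\Phi(A)$ by $P$ on the left then yields
\begin{equation*}
  P\Phi(A) = P\Phi(PA) + P\Phi((I-P)A) = \Phi(PA),
\end{equation*}
which is the claim.

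There is no real obstacle here: the statement is a direct consequence of combining the two defining properties of $\Phi$ with the elementary identity $A = PA + (I-P)A$. The only point to be careful about is the remark that $\Phi(0) = 0$, which follows at once from ortho-additivity applied to $0 \perp 0$, and is implicitly used when $A = 0$ (or more generally when one of the two pieces $PA$, $(I-P)A$ vanishes).
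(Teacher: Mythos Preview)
Your proof is correct and essentially identical to the paper's: both decompose $A = PA + (I-P)A$, apply ortho-additivity to get $\Phi(A) = \Phi(PA) + \Phi((I-P)A)$, use the range-contractive property to see that $\Phi(PA)$ lies under $P$ and $\Phi((I-P)A)$ under $I-P$, and then multiply by $P$. The only extra content in your write-up is the explicit verification of $(PA)^*(I-P)A = 0$ and the remark about $\Phi(0)=0$, both of which the paper leaves implicit.
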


\begin{proof}
Since $PA\perp(I-P)A$, it follows from the ortho-additivity of $\Phi$ that
\begin{equation*}
  \Phi(A)=\Phi(PA)+\Phi((I-P)A).
\end{equation*}
Since $\Phi$ is range-contractive, we have
\begin{equation*}
  P\Phi(PA)=\Phi(PA)\quad\text{and}\quad (I-P)\Phi((I-P)A)=\Phi((I-P)A).
\end{equation*}
Thus, $P\Phi(A)=P\Phi(PA)=\Phi(PA)$.
\end{proof}

As an application of \Cref{lem amazing}, we can obtain the following lemma.

\begin{lemma}\label{lem VBA}
Let $\Phi$ be an ortho-additive range-contractive map on a von Neumann algebra $\mathscr{M}$.
Suppose $V$ is a partial isometry in $\mathscr M$ with orthogonal initial projection $P$ and final projection $Q$, i.e., $P=V^*V$, $Q=VV^*$, and $PQ=0$.
Then for any $A\in\mathscr M$ and $B\in P\mathscr MP$, we have
\begin{equation*}
  \Phi(VBA)=VB\Phi(A).
\end{equation*}
In particular, $\Phi(VA)=V\Phi(A)$ for every $A\in\mathscr M$.
\end{lemma}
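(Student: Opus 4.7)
The plan is a three-step argument: first, reduce to $PA=A$; second, prove the identity when $B$ is a positive element of $P\mathscr{M}P$ via Lemma 2.5; third, extend to arbitrary $B$ by polar decomposition. For the reduction, since $B=BP$ we have $VBA=VB(PA)$, and by Lemma 3.1, $VB\Phi(A)=VBP\Phi(A)=VB\Phi(PA)$, so we may assume $PA=A$ throughout.

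For the positive case, given $T\geq 0$ in $P\mathscr{M}P$, set $S=T(P+T^2)^{-1/2}$ and $C=(P+T^2)^{-1/2}$ in $P\mathscr{M}P$. Then $0\leq S\leq (1-\varepsilon)P$ for some $\varepsilon>0$, $C=\sqrt{P-S^2}$, and $C^{-1}S=T$. By Lemma 2.5, $R=C^2+CSV^*+VCS+VS^2V^*$ is a projection. A direct calculation using $C^2+S^2=P$, $V^*V=P$, $PV=0$, and $V^*P=0$ shows that $Y:=A+VTA$ satisfies $RY=Y$. Since $PA=A$ and $QVTA=VTA$ with $PQ=0$, we have $A\perp VTA$, so ortho-additivity yields $\Phi(Y)=\Phi(A)+\Phi(VTA)$. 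Combining this with Lemma 3.1, which gives $\Phi(Y)=R\Phi(Y)$, $\Phi(A)\in P\mathscr{M}$, and $\Phi(VTA)\in Q\mathscr{M}$, and then applying $Q$ to $\Phi(Y)=R\Phi(Y)$, one obtains
\begin{equation*}
 \Phi(VTA)=VCS\Phi(A)+VS^2V^*\Phi(VTA),
\end{equation*}
whence $VC^2V^*\Phi(VTA)=VCS\Phi(A)$. Multiplying by $V^*$ on the left (using $V^*V=P$) and then by $C^{-2}$ in $P\mathscr{M}P$ gives $V^*\Phi(VTA)=T\Phi(A)$, and multiplying by $V$ on the left recovers $\Phi(VTA)=VT\Phi(A)$.

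For arbitrary $B\in P\mathscr{M}P$, use the polar decomposition $B=U|B|$ with $U\in P\mathscr{M}P$ a partial isometry and $|B|\geq 0$ in $P\mathscr{M}P$, so $U^*U$ coincides with the range projection of $|B|$. Then $V':=VU$ is a partial isometry with initial projection $U^*U\leq P$ and final projection $V(UU^*)V^*\leq VPV^*=Q$, so these are mutually orthogonal. Regarding $|B|$ as a positive element of $(U^*U)\mathscr{M}(U^*U)$ and applying the second step with $V$ replaced by $V'$ yields $\Phi(V'|B|A)=V'|B|\Phi(A)$, that is, $\Phi(VBA)=VB\Phi(A)$; taking $B=P$ recovers the ``in particular'' assertion. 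The principal technical hurdle is the calculation in the second step: verifying $RY=Y$ and extracting $\Phi(VTA)=VT\Phi(A)$ from the projected equation require careful bookkeeping with the relations $V^*V=P$, $VV^*=Q$, $PQ=0$, and the commutativity of $S$ and $C$ in $P\mathscr{M}P$.
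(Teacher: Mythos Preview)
Your proof is correct and follows essentially the same approach as the paper's: both arguments hinge on the projection $R$ from Lemma~2.5, combine it with Lemma~3.1 and ortho-additivity to get $\Phi(VTA)=VT\Phi(A)$ for positive $T\in P\mathscr{M}P$, and then pass to general $B$ via polar decomposition $B=U|B|$ and the replacement $V\mapsto VU$. The only differences are organizational: you reduce to $PA=A$ at the outset and work directly with $Y=A+VTA$, verifying $RY=Y$ by hand, whereas the paper first proves the intermediate identity $\Phi((VCS+VS^2V^*)A)=V(C^{-1}S)\Phi((C^2+CSV^*)A)$ for arbitrary $A$ by invoking the full conclusion of Lemma~2.5, and only afterwards substitutes $A\mapsto (P+V(C^{-1}S))A$ to reach the same endpoint. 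Your route is marginally more direct; the paper's route makes the role of Lemma~2.5 more explicit. One small point worth tightening: in the polar-decomposition step you apply the positive case with $V'=VU$ and $P'=U^*U$, but you only arranged $PA=A$, not $P'A=A$; this is harmless since $|B|=|B|P'$ lets you run the reduction step again, but it would be cleaner to say so.
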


\begin{proof}
Let $S$, $C$, and $R$ be operators given by \Cref{lem amazing}.
For any $A\in \mathscr M$, we have
\begin{equation*}
  RA=(C^2+CSV^*)A+(VCS+VS^2V^*)A.
\end{equation*}
Let $A_0=(C^2+CSV^*)A$ and $B_0=(VCS+VS^2V^*)A$.
Since $A_0\perp B_0$, it follows from the ortho-additivity of $\Phi$ that
\begin{equation*}
  \Phi(A_0+B_0)=\Phi(A_0)+\Phi(B_0).
\end{equation*}
Note that $PA_0=A_0$, $QB_0=B_0$, and $R(A_0+B_0)=A_0+B_0$.
It follows from \Cref{lem PA} that
\begin{equation*}
  P\Phi(A_0)=\Phi(A_0),\quad Q\Phi(B_0)=\Phi(B_0),\quad\text{and}\quad R(\Phi(A_0)+\Phi(B_0))=\Phi(A_0)+\Phi(B_0).
\end{equation*}
By \Cref{lem amazing}, we have $\Phi(B_0)=V(C^{-1}S)\Phi(A_0)$, i.e.,
\begin{equation}\label{equ B0}
  \Phi((VCS+VS^2V^*)A)=V(C^{-1}S)\Phi((C^2+CSV^*)A).
\end{equation}
A direct computation shows that
\begin{equation*}
  (VCS+VS^2V^*)(P+V(C^{-1}S))=V(C^{-1}S)\quad\text{and}\quad
  (C^2+CSV^*)(P+V(C^{-1}S))=P.
\end{equation*}
Substituting $A$ with $(P+V(C^{-1}S))A$ in \eqref{equ B0}, we obtain that
\begin{equation}\label{equ pre-VHA}
  \Phi(V(C^{-1}S)A)=V(C^{-1}S)\Phi(PA)\quad\text{for all}~A\in \mathscr M.
\end{equation}
We define a continuous function $f(t)=\frac{t}{\sqrt{t^2+1}}$ for $t\in[0,+\infty)$.
Let $H$ be a positive operator in $P\mathscr MP$ and $S=f(H)$.
It is routine to check that $C^{-1}S=H$.
Since the operator $S$ in $P\mathscr MP$ satisfies the condition in \Cref{lem amazing}, we have $\Phi(VHA)=VH\Phi(PA)$ by \eqref{equ pre-VHA}.
Combining with \Cref{lem PA}, we can get
\begin{equation}\label{equ VHA}
  \Phi(VHA)=VH\Phi(A)
\end{equation}
for all $A\in\mathscr M$ and all positive $H\in P\mathscr MP$.

Next, let $B$ be an operator in $P\mathscr MP$ and $B=UH$ the polar decomposition, where $U$ is a partial isometry and $H$ is a positive operator in $P\mathscr MP$.
Let
\begin{equation*}
  V_1=VU,\quad P_1=V_1^*V_1=U^*U\leqslant P,\quad\text{and}\quad Q_1=V_1V_1^*\leqslant Q.
\end{equation*}
Then $H$ is a positive operator in $P_1\mathscr MP_1$.
It follows from \eqref{equ VHA} that
\begin{equation*}
  \Phi(VBA)=\Phi(V_1HA)=V_1H\Phi(A)=VB\Phi(A).
\end{equation*}
This completes the proof.
\end{proof}

The following lemma is a direct consequence of \Cref{lem VBA}.

\begin{lemma}\label{lem BA}
Let $\Phi$ be an ortho-additive range-contractive map on a von Neumann algebra $\mathscr{M}$.
Suppose $P$ is a projection in $\mathscr M$ such that $P\preceq I-P$.
Then for any $A\in\mathscr M$ and $B\in P\mathscr MP$, we have
\begin{equation*}
  \Phi(BA)=B\Phi(A).
\end{equation*}
In particular, $\Phi(B)=B\Phi(I)$.
\end{lemma}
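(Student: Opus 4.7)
The plan is to derive the lemma from \Cref{lem VBA} by applying it to both $V$ and $V^*$, where $V$ is a partial isometry realizing the subequivalence $P\preceq I-P$.

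First I would invoke the hypothesis: since $P\preceq I-P$, there exists a projection $Q\leqslant I-P$ with $Q\sim P$, and hence a partial isometry $V\in\mathscr M$ with $V^*V=P$ and $VV^*=Q$. Crucially $PQ=0$, so both $V$ (with initial projection $P$, final projection $Q$) and $V^*$ (with initial projection $Q$, final projection $P$) satisfy the hypotheses of \Cref{lem VBA}.

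The main step would be to apply \Cref{lem VBA} to the partial isometry $V^*$ with the auxiliary element $C:=VBV^*\in Q\mathscr MQ$, where $B\in P\mathscr MP$. Using $V^*C=V^*VBV^*=PBV^*=BV^*$, this produces
\begin{equation*}
\Phi(BV^*A)=BV^*\Phi(A)\quad\text{for all }A\in\mathscr M.
\end{equation*}

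Finally, I would substitute $A\mapsto VA'$ in the identity above. On the left, $BV^*VA'=BPA'=BA'$ because $B=BP$ (as $B\in P\mathscr MP$); on the right, the special case $\Phi(VA')=V\Phi(A')$ of \Cref{lem VBA} yields $BV^*\Phi(VA')=BV^*V\Phi(A')=BP\Phi(A')=B\Phi(A')$. Combining these gives the desired identity $\Phi(BA')=B\Phi(A')$ for all $A'\in\mathscr M$, and setting $A'=I$ proves the ``in particular'' assertion. The only step requiring thought is the choice of the auxiliary element $C=VBV^*$; once that is in hand, the computation is forced by the identities $V^*V=P$, $VV^*=Q$, and $BP=B$, so there is no substantive obstacle.
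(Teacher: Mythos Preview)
Your proof is correct and follows essentially the same approach as the paper: both choose a partial isometry $V$ with $V^*V=P$, $VV^*=Q\leqslant I-P$, and obtain the result by applying \Cref{lem VBA} once to $V$ and once to $V^*$. The paper's version is slightly more direct, chaining the two applications in one line as $\Phi(BA)=\Phi(V^*VBA)=V^*\Phi(VBA)=V^*VB\Phi(A)=B\Phi(A)$, whereas you route through the auxiliary element $C=VBV^*\in Q\mathscr MQ$ and a substitution; the content is the same.
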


\begin{proof}
Since $P\preceq I-P$, there is a partial isometry $V$ and a projection $Q$ in $\mathcal{M}$ such that $P=V^*V$ and $Q=VV^*\leqslant I-P$.
By \Cref{lem VBA}, we have
\begin{equation*}
  \Phi(BA)=\Phi(V^*VBA)=V^*\Phi(VBA)=V^*VB\Phi(A)=B\Phi(A).
\end{equation*}
This completes the proof.
\end{proof}

If $V$ is a partial isometry in $\mathscr M$ such that $V^2=0$, then $\Phi(V)=V\Phi(I)$ by \Cref{lem VBA}.
Note that $V^2=0$ implies that $VV^*\sim V^*V\leqslant I-VV^*$.
In the following proposition, we replace the condition $V^2=0$ with the weaker condition $VV^*\preceq I-VV^*$.

\begin{proposition}\label{prop W}
Let $\Phi$ be an ortho-additive range-contractive map on a von Neumann algebra $\mathscr{M}$.
If $W$ is a partial isometry in $\mathscr M$ such that $WW^*\preceq I-WW^*$, then $\Phi(W)=W\Phi(I)$.
\end{proposition}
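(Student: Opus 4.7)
The plan is to reduce \Cref{prop W} to \Cref{lem VBA} via the witness of $WW^* \preceq I - WW^*$. Set $Q := WW^*$ and choose a partial isometry $U \in \mathscr M$ with $U^*U = Q$ and $UU^* \leq I - Q$; since the initial and final projections of $U$ are orthogonal, $U^2 = 0$. Applying \Cref{lem VBA} to $U$ gives $\Phi(UX) = U\Phi(X)$ for every $X \in \mathscr M$; taking $X = W$ yields $\Phi(UW) = U\Phi(W)$.

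The next step is to prove $\Phi(UW) = UW\Phi(I)$ directly. Once this is in hand, combining with the previous identity gives $U\Phi(W) = UW\Phi(I)$; left-multiplying by $U^*$ and using $U^*U = Q$ together with the range-contractivity identity $\Phi(W) = Q\Phi(W)$ (which holds because the range of $\Phi(W)$ is contained in the range $Q$ of $W$) then yields $\Phi(W) = QW\Phi(I) = W\Phi(I)$.

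The heart of the argument is thus proving $\Phi(UW) = UW\Phi(I)$. To do this, the plan is to invoke \Cref{lem amazing} with $V = U$ and a positive operator $S \in Q\mathscr MQ$ satisfying $0 \leq S \leq (1-\varepsilon)Q$; the resulting projection $R = C^2 + CSU^* + UCS + US^2U^*$, with $C = \sqrt{Q - S^2}$, lies in the corner $(Q + UU^*)\mathscr M(Q + UU^*)$. Combining ortho-additivity, \Cref{lem PA}, and \Cref{lem amazing} applied to the $\Phi$-images of an appropriate sum of operators involving $W$ that decomposes through $R$---together with a substitution of the argument analogous to $A \mapsto (P + V(C^{-1}S))A$ used in the proof of \Cref{lem VBA}---should isolate the desired identity.

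The hard part will be that, in contrast to \Cref{lem VBA}, the partial isometry $UW$ does not generally satisfy $(UW)^2 = 0$: this would require $W^*W$ to be orthogonal to $UU^*$, and the hypothesis $Q \preceq I - Q$ alone does not guarantee this. Consequently \Cref{lem VBA} cannot be applied directly to $UW$, and \Cref{lem amazing} must be deployed with care to avoid the tautological reductions that arise from naive applications of \Cref{lem VBA} (such as $\Phi(W) = U^*\Phi(UW) = U^*U\Phi(W) = \Phi(W)$), so as to extract a genuinely new identity relating $\Phi(W)$ to $W\Phi(I)$.
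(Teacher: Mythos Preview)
Your reduction in the first two paragraphs is correct: with $Q=WW^*$ and $U$ a partial isometry implementing $Q\preceq I-Q$, \Cref{lem VBA} gives $\Phi(UW)=U\Phi(W)$, and if one knew $\Phi(UW)=UW\Phi(I)$ then multiplying on the left by $U^*$ and using $Q\Phi(W)=\Phi(W)$ would finish.

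The gap is that you never prove $\Phi(UW)=UW\Phi(I)$; the third paragraph only gestures at \Cref{lem amazing} without specifying which pair $(A,B)$ to feed in or which identity falls out, and the final paragraph concedes the obstruction without resolving it. Worse, the reduction is circular. The operator $UW$ is itself a partial isometry with final projection $UU^*$, and since $UU^*\sim Q\leqslant I-UU^*$ we have $(UW)(UW)^*\preceq I-(UW)(UW)^*$. Thus the claim $\Phi(UW)=UW\Phi(I)$ is exactly another instance of \Cref{prop W}, for a partial isometry in no better position than $W$ (in particular $(UW)^2\ne 0$ in general, as you note). Repeating the trick produces an infinite regress, not a proof. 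Any invocation of \Cref{lem amazing} with $V=U$ only reproduces the content of \Cref{lem VBA} for $U$, namely $\Phi(UBA)=UB\Phi(A)$ for $B\in Q\mathscr MQ$; this yields $\Phi(UW)=U\Phi(W)$ again, not the missing relation to $\Phi(I)$.

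The paper breaks this circularity by abandoning the direct attack and instead using the type decomposition of $\mathscr M$. In each case (properly infinite; type $\mathrm{II}_1$; finite type $\mathrm{I}$) structural features of that type---halving of properly infinite projections, quartering in type $\mathrm{II}_1$, abelian projections in finite type $\mathrm{I}$---are used to manufacture a finite decomposition $W=\sum_j P_jW$ where each $P_jW$ lies in a corner $E_j\mathscr M E_j$ with $E_j\preceq I-E_j$, so that \Cref{lem BA} applies termwise. That is the missing idea: one must produce, from the hypothesis $WW^*\preceq I-WW^*$ together with type information, pieces to which \Cref{lem BA} applies directly. Your proposal contains no mechanism for doing so.
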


\begin{proof}
Let $P=WW^*$.
Then $P\preceq I-P$.
According to the type decomposition theorem of von Neumann algebras, the proof is divided into three cases.

\textbf{Case I.}
$\mathscr M$ is a properly infinite von Neumann algebra.

Since $P\preceq I-P$ and the identity $I$ is a properly infinite projection, $I-P$ is also a properly infinite projection.
By the halving lemma, there are projections $P_1$ and $P_2$ in $\mathscr M$ such that
\begin{equation*}
  I-P=P_1+P_2\sim P_1\sim P_2.
\end{equation*}
Since $P\preceq I-P\sim P_1$, we have $I=P+(I-P)\preceq P_1+P_2=I-P$.
It follows that $I\sim(I-P)$ and hence $I\sim P_1\sim P_2$.
Since $I-W^*W\leqslant I\sim P_1$, there exists a partial isometry $W_1$ and a projection $Q$ in $\mathscr M$ such that
\begin{equation*}
  I-W^*W=W_1^*W_1\quad\text{and}\quad Q=W_1W_1^*\leqslant P_1\leqslant I-P=I-WW^*.
\end{equation*}
Let $U=W+W_1$ and $E=UU^*$.
A direct computation shows that
\begin{equation*}
  U^*U=I,\quad PU=W,\quad\text{and}\quad P\leqslant E=P+Q\leqslant P+P_1=I-P_2.
\end{equation*}
Hence $E\sim I$.
Since $I\sim P_2\leqslant I-E$, we have $E\sim(I-E)\sim I$.
Let
\begin{equation*}
  E_1=UEU^*\leqslant E\quad\text{and}\quad E_2=U(I-E)U^*=E-E_1.
\end{equation*}
Since $\mathcal{R}(E_1U)\vee\mathcal{R}(U^*E_1)\leqslant E$ and $\mathcal{R}(E_2U)\vee\mathcal{R}(U^*E_2)\leqslant I-E_1$, it is clear that
\begin{equation*}
  E_1U\in E\mathscr ME\quad\text{and}\quad E_2U\in (I-E_1)\mathscr M(I-E_1).
\end{equation*}
Since $E_1\sim E\sim I$ and $I\sim I-E\leqslant I-E_1$, we have $E_1\sim(I-E_1)\sim I$.
By the ortho-additivity of $\Phi$ and \Cref{lem BA}, we can get
\begin{equation*}
  \Phi(U)=\Phi(E_1U+E_2U)=\Phi(E_1U)+\Phi(E_2U)=E_1U\Phi(I)+E_2U\Phi(I)=U\Phi(I).
\end{equation*}
Therefore, $\Phi(W)=\Phi(PU)=P\Phi(U)=PU\Phi(I)=W\Phi(I)$ by \Cref{lem PA}.

\textbf{Case II.}
$\mathscr M$ is a type $\mathrm{II}_1$ von Neumann algebra.

By \cite[Lemma 6.5.6]{KR2}, there are equivalent orthogonal projections $\{P_j\}_{j=1}^4$ in $\mathscr M$ with sum $P$.
Recall that $P=WW^*$.
Let $E_j=P_j\vee W^*P_jW$ for every $1\leqslant j\leqslant 4$.
Since $W^*P_jW\sim P_j\sim P_i$ for all $1\leqslant i,j\leqslant 4$, by \cite[Exercise 6.9.3(i)]{KR2}, we can get
\begin{equation*}
  E_j\preceq P_1+P_2\sim P_3+P_4\leqslant I-(P_1+P_2).
\end{equation*}
Since $\mathscr M$ is a finite von Neumann algebra, we have $P_1+P_2\preceq I-E_j$.
Hence $E_j\preceq I-E_j$ for all $1\leqslant j\leqslant 4$.
It follows from $P_jW\in E_j\mathscr M E_j$ and \Cref{lem BA} that $\Phi(P_jW)=P_jW\Phi(I)$.
Combining with \Cref{lem PA}, we have
\begin{equation*}
  \Phi(W)=P\Phi(W)=\sum P_j\Phi(W)=\sum\Phi(P_jW)=\sum P_jW\Phi(I)=W\Phi(I).
\end{equation*}

\textbf{Case III.} $\mathscr M$ is a finite type $\mathrm{I}$ von Neumann algebra.

We claim that if $V$ is a partial isometry in $\mathscr M$ such that $VV^*\leqslant P$ and $VV^*$ is an abelian projection, then
\begin{equation}\label{equ abelian}
  \Phi(V)=V\Phi(I).
\end{equation}
Let $E=VV^*$, $F=V^*V$, $G=E\vee F$, and $G_0=E\wedge F$.
Then $V\in G\mathscr MG$.
Since $E$ is an abelian projection, $F$ is also abelian.
It follows that
\begin{equation*}
  V^*G_0V=FV^*FG_0FVF=FVFV^*FG_0F=FEFG_0=G_0.
\end{equation*}
Thus, $(E-G_0)V$ is a partial isometry with initial projection $F-G_0$ and final projection $E-G_0$.
In particular, we have $F-G_0\sim E-G_0$.
Since $E-G_0\sim G-F$ by Kaplansky's formula (see \cite[Theorem 6.1.7]{KR2}), there is a partial isometry $U$ in $\mathscr M$ such that
\begin{equation*}
  U^*U=F-G_0\quad\text{and}\quad UU^*=G-F.
\end{equation*}
Since $F\sim E\leqslant P\preceq I-P$ and $\mathscr M$ is a finite von Neumann algebra, we can get $F\preceq I-F$.
Note that $U^*VU^*U\in F\mathscr MF$.
It follows from \Cref{lem VBA} and \Cref{lem BA} that
\begin{equation*}
  \Phi(UU^*VU^*U)=U\Phi(U^*VU^*U)=UU^*VU^*U\Phi(I),
\end{equation*}
i.e., $\Phi((G-F)V(F-G_0))=(G-F)V(F-G_0)\Phi(I)$.
Since $(G-F)VG_0=0$, it is straightforward to verify that
\begin{equation*}
  V=GVF=FVF+(G-F)V(F-G_0).
\end{equation*}
Moreover, $\Phi(FVF)=FVF\Phi(I)$ by \Cref{lem BA}.
By the ortho-additivity of $\Phi$, we have
\begin{align*}
  \Phi(V) &=\Phi(FVF+(G-F)V(F-G_0))\\
  &=FVF\Phi(I)+(G-F)V(F-G_0)\Phi(I)=V\Phi(I).
\end{align*}
This completes the proof of \eqref{equ abelian}.

Since $\mathscr M$ is a finite type $\mathrm{I}$ von Neumann algebra, there exists a family of mutually orthogonal abelian projections $\{P_j\}_{j\in\Lambda}$ in $\mathscr M$ with sum $P$.
Then $\Phi(P_jW)=P_jW\Phi(I)$ by \eqref{equ abelian}.
It follows from \Cref{lem PA} that
\begin{equation*}
  \Phi(W)=P\Phi(W)=\sum P_j\Phi(W)=\sum\Phi(P_jW)=\sum P_jW\Phi(I)=W\Phi(I).
\end{equation*}

\textbf{General Case.}
By the type decomposition theorem, there are central projections $Z_1$, $Z_2$, and $Z_3$ in $\mathscr M$ with sum $I$ such that $\mathscr MZ_1$ is properly infinite or $Z_1=0$, $\mathscr MZ_2$ is of type $\mathrm{II}_1$ or $Z_2=0$, and $\mathscr MZ_3$ is a finite type $\mathrm{I}$ von Neumann algebra or $Z_3=0$.
By \Cref{lem PA}, we can define a map $\Phi_j\colon\mathscr MZ_j\to\mathscr MZ_j$ by
\begin{equation*}
  \Phi_j(AZ_j)=\Phi(AZ_j)\quad\text{for all}~AZ_j\in\mathscr MZ_j.
\end{equation*}
Then $\Phi_j$ is an ortho-additive range-contractive map on $\mathscr MZ_j$ for every $1\leqslant j\leqslant 3$.
It follows that
\begin{equation*}
  \Phi(Z_jW)=\Phi_j(Z_jW)=Z_jW\Phi_j(Z_j)=Z_jW\Phi(Z_j)=Z_jW\Phi(I)
\end{equation*}
and then
\begin{equation*}
  \Phi(W)=\sum Z_j\Phi(W)=\sum\Phi(Z_jW)=\sum Z_jW\Phi(I)=W\Phi(I).
\end{equation*}
We complete the proof.
\end{proof}

Now we are ready to prove the main result in this section.

\begin{theorem}\label{thm main1}
Suppose $\mathscr M$ is a von Neumann algebra without direct summand of type
$\mathrm{I}_1$ and $\Phi$ is an ortho-additive range-contractive map on $\mathscr{M}$.
Then $\Phi(A)=A\Phi(I)$ for every $A\in\mathscr M$.
\end{theorem}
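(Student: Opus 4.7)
The plan is to reduce, via the type decomposition used in the General Case of \Cref{prop W}, to three pure types: $\mathscr M$ properly infinite, type~$\mathrm{II}_1$, or finite type $\mathrm I_n$ with $n\geqslant 2$. In each case I would construct an orthogonal decomposition $A=\sum_k A_k$ (in the $\perp$-sense, so ortho-additivity applies) such that each summand admits a polar decomposition $A_k=W_kH_k$ with both $W_kW_k^*\preceq I-W_kW_k^*$ and $W_k^*W_k\preceq I-W_k^*W_k$. For such a summand, \Cref{lem BA} gives $\Phi(H_k)=H_k\Phi(I)$ since $H_k\in(W_k^*W_k)\mathscr M(W_k^*W_k)$, while \Cref{prop W} in a strengthened form (see the next paragraph) gives $\Phi(W_kH_k)=W_k\Phi(H_k)$; hence $\Phi(A_k)=A_k\Phi(I)$, and summing over $k$ by ortho-additivity yields $\Phi(A)=A\Phi(I)$.

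The first supporting step is to promote \Cref{prop W} to the assertion that $\Phi(WA)=W\Phi(A)$ for every $A\in\mathscr M$ and every partial isometry $W$ with $WW^*\preceq I-WW^*$. This is a direct adaptation of the argument already in \Cref{prop W}: that argument decomposes $W$ (or a convenient extension of it) into an orthogonal sum of pieces, each lying in some corner $P\mathscr MP$ with $P\preceq I-P$, and then applies \Cref{lem BA} with second argument $I$; the very same application with second argument $A$ gives $\Phi((\text{piece})\cdot A)=(\text{piece})\cdot\Phi(A)$, and the sum is $\Phi(WA)=W\Phi(A)$.

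For the decomposition of $A$ itself, the three cases proceed as follows. In the type~$\mathrm{II}_1$ case, partition $I$ into four equivalent orthogonal projections $F_1,\dots,F_4$ and take $A_k=F_kA$. In the finite type $\mathrm I_n$ case ($n\geqslant 2$), partition $I$ into the $n$ equivalent abelian projections $P_1,\dots,P_n$ and take $A_k=P_kA$. The pieces $A_k$ are pairwise orthogonal because their ranges lie in disjoint $F_k$'s (respectively $P_k$'s), and by finiteness of the algebra (respectively by $n\geqslant 2$) both $\mathcal R(A_k)$ and the equivalent $\mathcal R(A_k^*)$ are dominated by their complements. The properly infinite case is the most delicate: after halving $I=E_1+E_2$ with $E_i\sim I$ and polar-decomposing $E_iA=W_iH_i$, the initial projection $W_i^*W_i$ can fail to satisfy $W_i^*W_i\preceq I-W_i^*W_i$ (for instance if $A$ is an isometry, forcing $W_i^*W_i=I$). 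The remedy is a second halving $W_i^*W_i=F_{i,1}+F_{i,2}$ inside the corner $(W_i^*W_i)\mathscr M(W_i^*W_i)$, refining $E_iA$ orthogonally into $W_iF_{i,1}H_i+W_iF_{i,2}H_i$ in which the range and the initial of each summand are now both controlled by a halving piece; this second halving is the main point of care in the proof and is available because every properly infinite corner can itself be halved into two equivalent orthogonal subprojections.
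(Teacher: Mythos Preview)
Your strengthening of \Cref{prop W} to $\Phi(WA)=W\Phi(A)$ is valid and does follow by running the same proof with second argument $A$; and in the finite cases (type $\mathrm{II}_1$ and type $\mathrm I_n$, $n\geqslant 2$) your decomposition works, since the center-valued trace forces both $\mathcal R(A_k)$ and $\mathcal R(A_k^*)\sim\mathcal R(A_k)$ to have trace at most $1/n\leqslant 1/2$.

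The properly infinite case, however, has a real gap. First, the corner $(W_i^*W_i)\mathscr M(W_i^*W_i)$ need not be properly infinite (take $A$ with $E_iA$ of finite rank in $\mathscr B(\mathscr H)$), so your second halving is not always available as stated; you would need a further split via a central projection into a finite part (where $W_i^*W_i\preceq I-W_i^*W_i$ holds automatically in a properly infinite algebra) and a properly infinite part. Second, and more seriously, even when $W_i^*W_i$ is properly infinite and you halve it as $F_{i,1}+F_{i,2}$, the polar decomposition of $W_iF_{i,j}H_i$ is \emph{not} $(W_iF_{i,j})\cdot H_i$ unless $H_i$ commutes with $F_{i,j}$: the initial projection is $\mathcal R(H_iF_{i,j}H_i)$, which is only $\preceq F_{i,j}$ (via equivalence with the range), not $\leqslant F_{i,j}$. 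The inference ``$Q\preceq F$ and $F\sim I-F$ imply $Q\preceq I-Q$'' is false in general (in $\mathscr B(\ell^2)$ take $F$ of infinite rank and corank and $Q=I$), so your claim that the initial projection is ``controlled by a halving piece'' does not by itself yield $W_k^*W_k\preceq I-W_k^*W_k$.

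The paper avoids all of this with a one-line factorization that makes the control of $W^*W$ unnecessary. For any projection $P$ with $P\preceq I-P$, take $W$ with $WW^*=\mathcal R(PA)\leqslant P$ and $W^*W=\mathcal R((PA)^*)$. Then $PAW^*\in P\mathscr MP$ (both $PAW^*$ and its adjoint have range under $P$), and $PA=(PAW^*)W$; so \Cref{lem BA} gives $\Phi(PA)=(PAW^*)\Phi(W)$, and \Cref{prop W} \emph{as stated} gives $\Phi(W)=W\Phi(I)$, whence $\Phi(PA)=PA\Phi(I)$. A single decomposition $I=P_1+P_2+P_3$ with $P_j\preceq I-P_j$ (available because there is no type $\mathrm I_1$ summand) then finishes via \Cref{lem PA}, uniformly across all types and with no strengthened \Cref{prop W} required.
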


\begin{proof}
Let $P$ be a projection in $\mathscr M$ such that $P\preceq I-P$.
By \cite[Proposition 6.1.6]{KR2}, there is a partial isometry in $\mathscr M$ such that
\begin{equation*}
  WW^*=\mathcal{R}(PA)\leqslant P\quad\text{and}\quad W^*W=\mathcal{R}((PA)^*).
\end{equation*}
It is clear that $WW^*\preceq I-WW^*$.
By \Cref{lem BA} and \Cref{prop W}, we have
\begin{equation*}
  \Phi(PA)=\Phi(PAW^*PW)=PAW^*P\Phi(W)=PAW^*PW\Phi(I)=PA\Phi(I).
\end{equation*}
Since $\mathscr M$ is a von Neumann algebra without direct summand of type $\mathrm{I}_1$, there are orthogonal projections $\{P_j\}_{j=1}^3$ in $\mathscr M$ with sum $I$ such that $P_j\preceq I-P_j$ for every $1\leqslant j\leqslant 3$.
More precisely, if $\mathscr M$ is properly infinite, or of type $\mathrm{II}_1$, or of type $\mathrm{I}_{2n}$, then we can assume that $P_1=0$ and $P_2\sim P_3$ by \cite[Lemma 6.3.3, Lemma 6.5.6]{KR2}.
If $\mathscr M$ is of type $\mathrm{I}_{2n+1}$, then we can assume that $P_1$ is abelian and $P_2\sim P_3$.
It follows from \Cref{lem PA} that
\begin{equation*}
  \Phi(A)=\sum P_j\Phi(A)=\sum\Phi(P_jA)=\sum P_jA\Phi(I)=A\Phi(I).
\end{equation*}
This completes the proof.
\end{proof}

\begin{remark}\label{rem main1}
If the map $\Phi$ is bijective in \Cref{thm main1}, then $\Phi(I)$ is invertible.
Indeed, there exists $A_0\in\mathscr M$ such that $\Phi(A_0)=I$, i.e., $A_0\Phi(I)=I$.
Let $B_0=\Phi(I)A_0$.
Then
\begin{equation*}
    \Phi(B_0-I)=(B_0-I)\Phi(I)=\Phi(I)A_0\Phi(I)-\Phi(I)=0.
\end{equation*}
It follows that $B_0-I=0$, i.e., $\Phi(I)A_0=I$.
Therefore, $\Phi(I)$ is invertible.
\end{remark}

Similar to \Cref{def ortho} (iv), a map $\Phi$ on $\mathscr M$ is called {\em range-expansive} if
\begin{equation*}
  \mathcal R(A)\leqslant\mathcal R(\Phi(A))\quad\text{for every}~A\in\mathscr M.
\end{equation*}
As a consequence of \Cref{thm main1}, we have the following result.

\begin{corollary}\label{cor expansive}
Suppose $\mathscr M$ is a von Neumann algebra without direct summand of type
$\mathrm{I}_1$ and $\Phi$ is an ortho-additive range-expansive bijective map on $\mathscr{M}$.
Then $\Phi(A)=A\Phi(I)$ for every $A\in\mathscr M$.
\end{corollary}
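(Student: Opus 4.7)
The plan is to pass to the inverse $\Phi^{-1}$, show that it satisfies the hypotheses of \Cref{thm main1}, and then invert the resulting identity. Since $\Phi$ is bijective, $\Phi^{-1}$ is a well-defined map on $\mathscr M$, and the key structural observation is that the range-expansive property of $\Phi$ translates directly into the range-contractive property of $\Phi^{-1}$: for any $C=\Phi(A)\in\mathscr M$, we have $\mathcal R(\Phi^{-1}(C))=\mathcal R(A)\leqslant\mathcal R(\Phi(A))=\mathcal R(C)$.

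Next, I would show $\Phi^{-1}$ is ortho-additive. Given $C,D\in\mathscr M$ with $C\perp D$, set $A=\Phi^{-1}(C)$ and $B=\Phi^{-1}(D)$. Using range-expansiveness of $\Phi$, the inclusions $\mathcal R(A)\leqslant\mathcal R(C)$ and $\mathcal R(B)\leqslant\mathcal R(D)$ combined with $\mathcal R(C)\perp\mathcal R(D)$ force $\mathcal R(A)\perp\mathcal R(B)$, i.e., $A\perp B$. Then the ortho-additivity of $\Phi$ gives
\begin{equation*}
  \Phi(A+B)=\Phi(A)+\Phi(B)=C+D,
\end{equation*}
so $\Phi^{-1}(C+D)=A+B=\Phi^{-1}(C)+\Phi^{-1}(D)$, as required.

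With $\Phi^{-1}$ now seen to be an ortho-additive range-contractive map on the same algebra $\mathscr M$ (which has no direct summand of type $\mathrm{I}_1$), \Cref{thm main1} yields
\begin{equation*}
  \Phi^{-1}(C)=C\,\Phi^{-1}(I)\qquad\text{for all }C\in\mathscr M.
\end{equation*}
Since $\Phi^{-1}$ is itself bijective, \Cref{rem main1} applies to $\Phi^{-1}$, showing that $T:=\Phi^{-1}(I)$ is invertible in $\mathscr M$. Substituting $C=\Phi(A)$ gives $A=\Phi(A)T$, whence $\Phi(A)=AT^{-1}$ for every $A\in\mathscr M$. Evaluating at $A=I$ yields $\Phi(I)=T^{-1}$, and the identity $\Phi(A)=A\Phi(I)$ follows.

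There is no serious obstacle here; the whole argument is a duality between the range-contractive and range-expansive settings, and the only subtle point is verifying that ortho-additivity transfers to $\Phi^{-1}$, which works precisely because range-expansiveness lets one pull back the orthogonality of $C$ and $D$ to their preimages.
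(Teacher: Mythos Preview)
Your proof is correct and follows essentially the same approach as the paper: pass to $\Phi^{-1}$, verify it is ortho-additive and range-contractive, apply \Cref{thm main1}, and then invert using the invertibility from \Cref{rem main1}. The only cosmetic difference is notation (the paper writes $\Psi$ for $\Phi^{-1}$ and $T$ for $\Psi(I)^{-1}$ rather than $\Psi(I)$).
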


\begin{proof}
Let $\Psi$ be the inverse map of $\Phi$.
It is clear that $\Psi$ is range-contractive.
In particular, if $A$ and $B$ are orthogonal operators in $\mathscr M$, then $\Psi(A)$ and $\Psi(B)$ are orthogonal.
Since $\Phi$ is ortho-additive, we have
\begin{equation*}
  \Phi(\Psi(A)+\Psi(B))=\Phi(\Psi(A))+\Phi(\Psi(B))=A+B,
\end{equation*}
i.e., $\Psi(A+B)=\Psi(A)+\Psi(B)$.
Hence $\Psi$ is ortho-additive.
It follows from \Cref{thm main1} that $\Psi(A)=A\Psi(I)$ for every $A\in\mathscr M$.
Note that $\Psi(I)$ is invertible by \Cref{rem main1}, whose inverse is denoted by $T$.
Then $\Psi(A)T=A$ for every $A\in\mathscr M$.
Substituting $A$ for $\Phi(A)$, we have $\Phi(A)=AT$.
In particular, $\Phi(I)=T$.
This completes the proof.
\end{proof}

Note that the map $\Phi$ is not required to be bijective in \Cref{thm main1}.
We present the following question.

\begin{question}\label{que expansive}
Suppose $\mathscr M$ is a von Neumann algebra without direct summand of type
$\mathrm{I}_1$ and $\Phi$ is an ortho-additive range-expansive map on $\mathscr{M}$.
Do we have $\Phi(A)=A\Phi(I)$ for every $A\in\mathscr M$?
\end{question}

\section{Ortho-additive ortho-isomorphisms}\label{sec ortho-isomorphism}

In this section, we will characterize the structure of ortho-additive ortho-isomorphisms between von Neumann algebras (see \Cref{def ortho}).
Due to \Cref{eg strange}, we assume in \Cref{thm main2} that $\mathscr M$ has no direct summand of type $\mathrm{I}_1$.
The case of type $\mathrm{I}_2$ von Neumann algebras will be handled separately in \Cref{subsec type-I2}.

\subsection{Projection lattices}
Recall that $\mathscr P(\mathscr M)$ denotes the set of all projections in a von Neumann algebra $\mathscr M$.
The following lemma states that every ortho-isomorphism between von Neumann algebras naturally induces a projection ortho-isomorphism.

\begin{lemma}\label{lem theta}
Suppose $\mathscr M$ and $\mathscr N$ are von Neumann algebras, and $\varphi\colon\mathscr M\to\mathscr N$ is an ortho-isomorphism.
Then the map $\theta\colon\mathscr P(\mathscr M)\to\mathscr P(\mathscr N)$ defined by
\begin{equation*}
  \theta(\mathcal{R}(A))=\mathcal{R}(\varphi(A))\quad\text{for all}~A\in\mathscr M
\end{equation*}
is a projection ortho-isomorphism.
\end{lemma}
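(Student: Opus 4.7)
The plan is to establish the lemma in three stages: (i) well-definedness of $\theta$, (ii) bijectivity, and (iii) preservation of orthogonality in both directions.

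The key observation I would isolate first is an orthogonality-set characterization of range projections: for operators $A, B$ in any von Neumann algebra, $\mathcal{R}(A) = \mathcal{R}(B)$ if and only if $\{C : A \perp C\} = \{C : B \perp C\}$. The forward direction is immediate from $A \perp C \Leftrightarrow \mathcal{R}(C) \leqslant I - \mathcal{R}(A)$. For the converse, if $\mathcal{R}(A) \not\leqslant \mathcal{R}(B)$, then the projection $C_0 = I - \mathcal{R}(B)$ is a witness: it satisfies $B \perp C_0$ trivially, while $A \perp C_0$ would force $\mathcal{R}(A) \leqslant \mathcal{R}(B)$, contradicting the assumption.

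With this characterization in hand, well-definedness of $\theta$ falls out directly. If $\mathcal{R}(A) = \mathcal{R}(B)$, then the sets $\{C : A \perp C\}$ and $\{C : B \perp C\}$ coincide in $\mathscr M$. Because $\varphi$ is a bijection and $A \perp C \Leftrightarrow \varphi(A) \perp \varphi(C)$, transporting these sets along $\varphi$ yields $\{D \in \mathscr N : \varphi(A) \perp D\} = \{D \in \mathscr N : \varphi(B) \perp D\}$. Applying the characterization in $\mathscr N$ then gives $\mathcal{R}(\varphi(A)) = \mathcal{R}(\varphi(B))$, so $\theta$ is well-defined. For injectivity, if $\theta(P_1) = \theta(P_2)$ with $P_j \in \mathscr P(\mathscr M)$, then running the same argument with $\varphi^{-1}$ (which is also an ortho-isomorphism) gives $P_1 = \mathcal{R}(P_1) = \mathcal{R}(P_2) = P_2$. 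Surjectivity is direct: any $Q \in \mathscr P(\mathscr N)$ satisfies $Q = \mathcal{R}(Q) = \mathcal{R}(\varphi(\varphi^{-1}(Q)))$, so $\theta(\mathcal{R}(\varphi^{-1}(Q))) = Q$.

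Finally, ortho-preservation in both directions is essentially formal once well-definedness is in place: for projections $P_1, P_2 \in \mathscr P(\mathscr M)$, one has $P_1 \perp P_2$ in the operator sense iff $\varphi(P_1) \perp \varphi(P_2)$ (ortho-isomorphism property), and orthogonality of operators is equivalent to orthogonality of their range projections, namely $\theta(P_1)$ and $\theta(P_2)$. The main obstacle is really the first stage; everything after the range-projection characterization is bookkeeping, but that characterization is the conceptual heart of the statement, since it is what allows the purely operator-theoretic hypothesis on $\varphi$ to descend unambiguously to the projection lattice.
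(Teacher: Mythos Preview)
Your proof is correct and follows essentially the same approach as the paper: both arguments rest on the observation that the range projection of an operator is determined by its orthogonality set $\{C : A \perp C\}$, and both transport this set through the bijection $\varphi$ to obtain well-definedness. The only cosmetic difference is that the paper phrases the key step as order preservation ($\mathcal{R}(A)\leqslant\mathcal{R}(B)\Rightarrow\mathcal{R}(\varphi(A))\leqslant\mathcal{R}(\varphi(B))$) rather than equality preservation, and then dismisses bijectivity and ortho-preservation as ``clear''; your version spells these out more explicitly.
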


\begin{proof}
Let $A$ and $B$ be operators in $\mathscr M$ with $\mathcal{R}(A)\leqslant\mathcal{R}(B)$.
Since $\varphi$ is an ortho-isomorphism, for any operator $C$ in $\mathscr M$ with $\varphi(C)\perp\varphi(B)$, we have $C\perp B$.
It follows that $C\perp A$ and hence $\varphi(C)\perp\varphi(A)$.
Since $\varphi$ is bijective, we obtain that
\begin{equation*}
  \mathcal{R}(\varphi(A))\leqslant\mathcal{R}(\varphi(B)).
\end{equation*}
In particular, $\mathcal{R}(A)=\mathcal{R}(B)$ implies $\mathcal{R}(\varphi(A))=\mathcal{R}(\varphi(B))$.
Thus, the map $\theta$ is well-defined.
It is clear that $\theta$ is bijective and preserves orthogonality in both directions.
\end{proof}

In the next proposition, we characterize the structure of ortho-isomorphisms.

\begin{proposition}\label{prop ortho-isomorphism}
Suppose $\mathscr M$ and $\mathscr N$ are von Neumann algebras such that $\mathscr M$ has no direct summand of type $\mathrm{I}_2$, and $\varphi\colon\mathscr M\to\mathscr N$ is an ortho-isomorphism.
Then there exists a map $\Phi\colon\mathscr M\to\mathscr N$ such that
\begin{equation*}
  \Phi(\mathcal{R}(A))=\mathcal{R}(\Phi(A))=\mathcal{R}(\varphi(A))\quad\text{for all}~ A\in \mathscr M,
\end{equation*}
and $\Phi$ is the direct sum of a *-isomorphism and a conjugate *-isomorphism.
\end{proposition}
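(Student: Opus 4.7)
The plan is to piece together \Cref{lem theta} with Dye's theorem (\Cref{thm Dye}). First, I would invoke \Cref{lem theta}, which tells me that the ortho-isomorphism $\varphi$ naturally induces a projection ortho-isomorphism $\theta\colon\mathscr P(\mathscr M)\to\mathscr P(\mathscr N)$ via the assignment $\theta(\mathcal R(A))=\mathcal R(\varphi(A))$. Since $\mathscr M$ has no direct summand of type $\mathrm{I}_2$, Dye's theorem applies directly to $\theta$ and delivers an extension $\Phi\colon\mathscr M\to\mathscr N$ which is the direct sum of a *-isomorphism and a conjugate *-isomorphism. This $\Phi$ is the map required by the proposition.

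Next, I would verify the chain of identities $\Phi(\mathcal R(A))=\mathcal R(\Phi(A))=\mathcal R(\varphi(A))$. The equality $\Phi(\mathcal R(A))=\mathcal R(\varphi(A))$ is immediate from the defining relation of $\theta$, since $\Phi$ extends $\theta$ on $\mathscr P(\mathscr M)$. For the first equality $\Phi(\mathcal R(A))=\mathcal R(\Phi(A))$, I would use the fact that both *-isomorphisms and conjugate *-isomorphisms preserve range projections. Indeed, $\mathcal R(A)$ admits the purely algebraic description as the smallest projection $P$ satisfying $PA=A$; since on each summand $\Phi$ is bijective, multiplicative, involution-preserving, and order-preserving on $\mathscr P(\mathscr M)$ (the possible anti-linearity of the conjugate *-isomorphism piece is irrelevant to these structural properties), this minimality characterization is transported intact, giving $\mathcal R(\Phi(A))=\Phi(\mathcal R(A))$.

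This proposition is essentially a bookkeeping combination of \Cref{lem theta} and \Cref{thm Dye}, so there is no real obstacle. The only conceptual point to register is that the projection ortho-isomorphism extracted from $\varphi$ via \Cref{lem theta} is precisely the object to which Dye's theorem can be applied, and that range-projection preservation then follows automatically from the algebraic description of $\mathcal R(\cdot)$.
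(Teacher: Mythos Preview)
Your proposal is correct and follows essentially the same approach as the paper: invoke \Cref{lem theta} to obtain the projection ortho-isomorphism $\theta$, apply \Cref{thm Dye} to extend it to $\Phi$, and then verify that $\Phi$ preserves range projections. The paper establishes $\Phi(\mathcal R(A))=\mathcal R(\Phi(A))$ by applying the identity $\mathcal R(A)A=A$ to get one inequality and then repeating the argument with $\Phi^{-1}$ for the reverse, which is just an unpacking of your minimality-characterization argument.
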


\begin{proof}
Let $\theta$ be the map given by \Cref{lem theta}.
By \Cref{thm Dye}, $\theta$ can be extended to a map $\Phi\colon\mathscr M\to\mathscr N$ which is the direct sum of a *-isomorphism and a conjugate *-isomorphism.
It follows that
\begin{equation*}
  \Phi(\mathcal{R}(A))=\theta(\mathcal{R}(A))=\mathcal{R}(\varphi(A))
  \quad\text{for all}~A\in\mathscr M.
\end{equation*}
Next, we need to show that $\Phi(\mathcal{R}(A))=\mathcal{R}(\Phi(A))$ for every $A\in\mathscr M$.
In fact, it follows from the identity $\mathcal{R}(A)A=A$ that $\Phi(\mathcal{R}(A))\Phi(A)=\Phi(A)$.
Since $\Phi(\mathcal{R}(A))$ is a projection, we have $\mathcal{R}(\Phi(A))\leqslant\Phi(\mathcal{R}(A))$.
Since $\Phi^{-1}$ is also the direct sum of a *-isomorphism and a conjugate *-isomorphism, we have $\mathcal{R}(\Phi^{-1}(A))\leqslant\Phi^{-1}(\mathcal{R}(A))$, i.e., $\Phi(\mathcal{R}(A))\leqslant\mathcal{R}(\Phi(A))$ for every $A\in\mathscr M$.
This completes the proof.
\end{proof}

Suppose $\mathscr M$ and $\mathscr N$ are finite von Neumann algebras, and $\varphi\colon\mathscr M\to\mathscr N$ is an ortho-isomorphism.
Since $0$ is the only operator orthogonal to all operators, we must have $\varphi(0)=0$.
For any operator $A\in\mathscr M$ with $A\perp\varphi(I)$, we have $\varphi^{-1}(A)\perp I$.
It follows that $\varphi^{-1}(A)=0$ and hence $A=\varphi(0)=0$.
Thus, $\mathcal R(\varphi(I))=I$.
Since $\mathscr N$ is a finite von Neumann algebra, we must have $\ker\varphi(I)=0$.
Recall that $\eta\mathscr N$ is the set of all (unbounded) operators affiliated with $\mathscr N$.
Therefore, the inverse $\varphi(I)^{-1}$ of $\varphi(I)$ is well-defined in $\eta\mathscr N$.
Since $\mathscr N$ is a finite von Neumann algebra, $\eta\mathscr N$ is a *-algebra.
We define a map $\Phi\colon\mathscr M\to\eta\mathscr N$ by
\begin{equation*}
  \Phi(A)=\varphi(A)\varphi(I)^{-1}.
\end{equation*}
Then $\Phi$ is an ortho-preserving map from $\mathscr M$ into $\eta\mathscr N$.
The following lemma also provides a projection ortho-isomorphism, and the method differs from that of \Cref{lem theta}.

\begin{lemma}\label{lem Phi-on-P}
Suppose $\mathscr M$ and $\mathscr N$ are finite von Neumann algebras, and $\varphi\colon\mathscr M\to\mathscr N$ is an ortho-additive ortho-isomorphism.
Let $\Phi\colon\mathscr M\to\eta\mathscr N$ be the map defined by
\begin{equation*}
  \Phi(A)=\varphi(A)\varphi(I)^{-1}\quad\text{for all}~A\in\mathscr M.
\end{equation*}
Then the restriction $\Phi|\mathscr P(\mathscr M)$ is an ortho-isomorphism from $\mathscr P(\mathscr M)$ onto $\mathscr P(\mathscr N)$.
\end{lemma}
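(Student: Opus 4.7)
The plan is to identify the restriction $\Phi|\mathscr P(\mathscr M)$ with the projection ortho-isomorphism $\theta\colon\mathscr P(\mathscr M)\to\mathscr P(\mathscr N)$ that \Cref{lem theta} already extracts from $\varphi$ (without using the ortho-additivity hypothesis). Once I establish that $\Phi(P)=\theta(P)=\mathcal R(\varphi(P))$ for every projection $P\in\mathscr M$, the conclusion follows at once, since $\theta$ is a bijection between the two projection lattices that preserves orthogonality in both directions. So the real content of the lemma is the single identity $\Phi(P)=\mathcal R(\varphi(P))$.

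To prove this identity I would invoke ortho-additivity for the decomposition $I=P+(I-P)$, which is available because $P\perp(I-P)$. It gives $\varphi(I)=\varphi(P)+\varphi(I-P)$, and because $\varphi$ is ortho-preserving we also have $\varphi(P)\perp\varphi(I-P)$. Writing $Q:=\mathcal R(\varphi(P))$ and $Q':=\mathcal R(\varphi(I-P))$, the range orthogonality $QQ'=0$ forces $Q\varphi(I-P)=Q Q'\varphi(I-P)=0$, while $Q\varphi(P)=\varphi(P)$ is tautological. Adding the two identities produces
\begin{equation*}
Q\varphi(I)=\varphi(P).
\end{equation*}
Right-multiplying by the affiliated operator $\varphi(I)^{-1}\in\eta\mathscr N$, whose existence was recorded in the paragraph preceding the lemma, then gives $\Phi(P)=\varphi(P)\varphi(I)^{-1}=Q\in\mathscr P(\mathscr N)$, which is exactly $\theta(P)$.

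The only point that could look delicate is that $\varphi(I)^{-1}$ is affiliated and in general unbounded, so the right-multiplication step has to be carried out inside the $*$-algebra $\eta\mathscr N$. However, $Q\varphi(I)=\varphi(P)$ is an equality of bounded operators and the relation $\varphi(I)\cdot\varphi(I)^{-1}=I$ holds by definition of the inverse in $\eta\mathscr N$, so the cancellation is legitimate with no domain issues. I therefore do not anticipate any genuine obstacle beyond the bookkeeping above; in particular there is no need to revisit the orthogonality-preserving or bijectivity statements, as these are imported wholesale from \Cref{lem theta} via the equality $\Phi|\mathscr P(\mathscr M)=\theta$.
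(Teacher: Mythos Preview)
Your proof is correct and takes essentially the same approach as the paper: both exploit ortho-additivity on $I=P+(I-P)$ together with $\varphi(P)\perp\varphi(I-P)$, and both ultimately identify $\Phi(P)$ with $\mathcal R(\varphi(P))$. The only organizational difference is that you establish $\Phi(P)=\mathcal R(\varphi(P))=\theta(P)$ directly and then import all properties from \Cref{lem theta}, whereas the paper first verifies algebraically that $\Phi(P)$ is a projection (via $\Phi(P)^*\Phi(P)=\Phi(P)^*$) and only afterwards makes this identification in the course of proving surjectivity.
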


\begin{proof}
Based on the discussion preceding this lemma, the map $\Phi$ is well-defined.
Since $\varphi$ is ortho-additive and ortho-preserving, for any projection $P$ in $\mathscr M$, we have
\begin{equation*}
  \Phi(P)+\Phi(I-P)=I\quad\text{and}\quad\Phi(P)^*\Phi(I-P)=0.
\end{equation*}
It follows that $\Phi(P)^*\Phi(P)=\Phi(P)^*$.
Hence $\Phi(P)$ is a projection in $\mathscr N$.
Let $Q$ be a projection in $\mathscr N$.
Since $\varphi$ is onto, there exists $A\in\mathscr M$ such that $\varphi(A)=Q\varphi(I)$.
Let $P$ be the range projection of $A$.
Then $\mathcal R(\varphi(P))=\mathcal R(\varphi(A))$ by \Cref{lem theta}.
Since $\Phi(P)$ is a projection, we can get
\begin{equation*}
  \Phi(P)=\mathcal R(\Phi(P))=\mathcal R(\varphi(P))=\mathcal R(\varphi(A))=Q.
\end{equation*}
Therefore, $\Phi$ maps $\mathscr P(\mathscr M)$ onto $\mathscr P(\mathscr N)$.
This completes the proof.
\end{proof}

\subsection{Type $\mathrm{I}_2$ von Neumann algebras}\label{subsec type-I2}
In this subsection, we will establish the structure of ortho-additive ortho-isomorphisms between type $\mathrm{I}_2$ von Neumann algebras.
Suppose $\mathscr M$ and $\mathscr N$ are type $\mathrm{I}_2$ von Neumann algebras.
By \cite[Theorem 6.6.5]{KR2}, we may assume that $\mathscr M=M_2(\mathscr A)$ and $\mathscr N=M_2(\mathscr B)$, where $\mathscr A$ and $\mathscr B$ are abelian von Neumann algebras.
Then every operator in $\mathscr M$ can be written as an operator matrix
$
\begin{pmatrix}
  a_{11} & a_{12} \\
  a_{21} & a_{22}
\end{pmatrix}$, where $a_{ij}\in\mathscr A$.
Let
\begin{equation}\label{equ E-ij}
  E_{11}=
  \begin{pmatrix}
    1 & 0 \\
    0 & 0
  \end{pmatrix},
  E_{12}=
  \begin{pmatrix}
    0 & 1 \\
    0 & 0
  \end{pmatrix},
  E_{21}=
  \begin{pmatrix}
    0 & 0 \\
    1 & 0
  \end{pmatrix},~\text{and}~
  E_{22}=
  \begin{pmatrix}
    0 & 0 \\
    0 & 1
  \end{pmatrix}.
\end{equation}
Then $\{E_{ij}\}_{1\leqslant i,j\leqslant 2}$ is a system of matrix units in $\mathscr M$ with $E_{11}+E_{22}=I$.
Recall that $\eta\mathscr N$ is the *-algebra of all (unbounded) operator affiliated with $\mathscr N$ and $\eta\mathscr N=M_2(\eta\mathscr B)$.

\begin{lemma}\label{lem Phi-E-F}
Suppose $\mathscr M$ and $\mathscr N$ are type $\mathrm{I}_2$ von Neumann algebras, and $\Phi\colon\mathscr M\to\eta\mathscr N$ is an ortho-additive ortho-preserving map.
Let $\{E_{ij}\}_{1\leqslant i,j\leqslant 2}$ and $\{F_{ij}\}_{1\leqslant i,j\leqslant 2}$ be systems of matrix units in $\mathscr M$ and $\mathscr N$, respectively.
If $\Phi(E_{jj})=F_{jj}$ for $j=1,2$, then $\Phi$ is a map from $\mathscr M$ into $\mathscr N$ and is the direct sum of a *-homomorphism and a conjugate *-homomorphism.
\end{lemma}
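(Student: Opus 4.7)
My plan is to reduce $\Phi$ on $\mathscr M=M_2(\mathscr A)$ to a single scalar $*$-ring homomorphism on $\mathscr A$ and then invoke \Cref{lem *ring-homomorphism}.

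First I establish a row decomposition: for every $A\in\mathscr M$ and $i=1,2$,
\begin{equation*}
\Phi(E_{ii}A)=F_{ii}\Phi(A).
\end{equation*}
Indeed, $E_{11}A\perp E_{22}A$, so ortho-additivity gives $\Phi(A)=\Phi(E_{11}A)+\Phi(E_{22}A)$; and $E_{11}A\perp E_{22}$ together with ortho-preservation forces $\mathcal R(\Phi(E_{11}A))\leqslant F_{11}$, so $F_{11}\Phi(A)=\Phi(E_{11}A)$ and similarly for $F_{22}$. In particular $\Phi(aE_{ij})\in F_{ii}\eta\mathscr N$ for all $a\in\mathscr A$.

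Next I analyse the left corner $\mathscr AE_{11}$. For each projection $p\in\mathscr A$, the pair $(pE_{11},(1-p)E_{11})$ is orthogonal and sums to $E_{11}$, so ortho-additivity and $\Phi(E_{11})=F_{11}$ give $\Phi(pE_{11})+\Phi((1-p)E_{11})=F_{11}$ with orthogonal ranges, and a short computation (as in \Cref{lem Phi-on-P}) shows $\Phi(pE_{11})$ is a self-adjoint idempotent. Since any projection in $F_{11}\eta\mathscr N$ is necessarily of the form $\pi_1(p)F_{11}$ with $\pi_1(p)\in\mathscr P(\mathscr B)$, we obtain a map $\pi_1\colon\mathscr P(\mathscr A)\to\mathscr P(\mathscr B)$. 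Working with the four pairwise orthogonal projections $pq,p(1-q),(1-p)q,(1-p)(1-q)$ in $\mathscr A$ and their images, one checks that $\pi_1(pq)=\pi_1(p)\pi_1(q)$, so $\pi_1$ is a unital Boolean-algebra homomorphism. An analogous analysis at the $F_{22}$-corner produces $\pi_2$. I then extend each $\pi_i$ by spectral integration to a unital $*$-ring homomorphism $f_i\colon\mathscr A\to\eta\mathscr B$ satisfying $\Phi(aE_{ii})=f_i(a)F_{ii}$.

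To identify $f_1=f_2=:f$ and treat the off-diagonal entries, I exploit the fact that $aE_{12}\perp bE_{21}$ always (so ortho-additivity splits sums involving both off-diagonal matrix units cleanly), together with the symmetry provided by the self-adjoint unitary $W=E_{12}+E_{21}$ which conjugates $E_{11}$ to $E_{22}$. The expected outcome is $\Phi(aE_{ij})=f(a)F_{ij}$ for a common $f$, hence
\begin{equation*}
\Phi\Bigl(\sum_{i,j}a_{ij}E_{ij}\Bigr)=\sum_{i,j}f(a_{ij})F_{ij}.
\end{equation*}
Then \Cref{lem *ring-homomorphism} decomposes $f=f^{(1)}\oplus f^{(2)}$ into a $*$-homomorphism and a conjugate $*$-homomorphism mapping into $\mathscr B$; the corresponding central projections lift through $\mathscr B\hookrightarrow M_2(\mathscr B)=\mathscr N$ to split $\Phi$ as the required direct sum, and in particular force $\Phi$ to land in $\mathscr N$.

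The main obstacle is the passage from the projection lattice to all of $\mathscr A$, in two intertwined senses. First, showing that $\Phi(aE_{11})$ has no $F_{12}$-component is a kind of \emph{column preservation} which does not follow from row preservation by formal symmetry; one must either build it from the spectral resolution of self-adjoint $a$ (carefully leveraging ortho-additivity on the spectral pieces and controlling the limit) or derive it from a well-chosen orthogonality relation. Second, the identification $f_1=f_2$ is exactly the point where the type-$\mathrm{I}_2$ anti-automorphism phenomenon enters, and it is here that the hypothesis $\Phi(E_{jj})=F_{jj}$ is essential in blocking the unwanted swap.
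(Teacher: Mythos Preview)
Your row decomposition and the identification of $\Phi(pE_{11})$ as a projection $\pi_1(p)F_{11}$ are fine, but the step ``extend each $\pi_i$ by spectral integration to a unital $*$-ring homomorphism $f_i$ satisfying $\Phi(aE_{ii})=f_i(a)F_{ii}$'' does not go through, and this is where the argument collapses rather than merely having a loose end. Ortho-additivity tells you nothing about $\Phi(\lambda pE_{11})$ for a scalar $\lambda\neq 1$: there is no homogeneity hypothesis, so even for a simple element $a=\sum_k\lambda_kp_k$ with mutually orthogonal $p_k$ you cannot recover $\Phi(aE_{11})$ from the values $\pi_1(p_k)$. Nor is there any continuity assumption that would let you pass to limits. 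You flag this yourself as ``the main obstacle,'' but the spectral-approximation route is genuinely blocked, not just unfinished; knowing $\Phi$ on the projection lattice of $\mathscr A E_{11}$ simply does not determine $\Phi$ on $\mathscr A E_{11}$ under the stated hypotheses.

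The paper sidesteps this entirely by exploiting $2\times 2$ orthogonality relations directly for arbitrary $a,b,c\in\mathscr A$. With $\Phi\bigl(\begin{smallmatrix}a&b\\0&0\end{smallmatrix}\bigr)=\bigl(\begin{smallmatrix}\Phi_1(a,b)&\Phi_2(a,b)\\0&0\end{smallmatrix}\bigr)$, the single relation
\[
\begin{pmatrix}c&0\\-1&0\end{pmatrix}\perp\begin{pmatrix}a&b\\c^*a&c^*b\end{pmatrix}
\]
yields, via ortho-additivity on the sum and ortho-preservation on the pair, both $\Phi_1(a+c,b)=\Phi_1(a,b)+\Phi_1(c,0)$ and (after using $\Phi_1(1,0)=1$) $\Phi_1(c^*a,c^*b)=\Phi_1(c,0)^*\Phi_1(a,b)$. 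These two identities already force $\Phi_1(a,b)=f(a)$ for a unital $*$-ring homomorphism $f$, with no spectral calculus and no continuity needed; this is precisely the ``well-chosen orthogonality relation'' you allude to but do not identify. A second relation $\bigl(\begin{smallmatrix}a^*&0\\-1&0\end{smallmatrix}\bigr)\perp\bigl(\begin{smallmatrix}0&1\\0&a\end{smallmatrix}\bigr)$ then ties the two diagonal corners together. One further correction to your expected outcome: the final form is not $\Phi(aE_{ij})=f(a)F_{ij}$ but $\Phi=U(I_2\otimes f)U^*$ for a diagonal unitary $U=\bigl(\begin{smallmatrix}1&0\\0&u\end{smallmatrix}\bigr)$ with $u\in\mathscr B$; the given $F_{12}$ need not coincide with $\Phi(E_{12})$.
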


\begin{proof}
We may assume that $\mathscr M=M_2(\mathscr A)$ and $\mathscr N=M_2(\mathscr B)$, where $\mathscr A$ and $\mathscr B$ are abelian von Neumann algebras.
Without loss of generality, we assume that both $\{E_{ij}\}_{1\leqslant i,j\leqslant 2}$ and $\{F_{ij}\}_{1\leqslant i,j\leqslant 2}$ are of the form given by \eqref{equ E-ij}.
Since $\Phi(E_{22})=F_{22}$ and $\Phi$ is ortho-preserving, for any $a,b\in\mathscr A$, $E_{22}\perp
\begin{pmatrix}
  a & b \\
  0 & 0
\end{pmatrix}$ implies that
$F_{22}\perp\Phi(\begin{pmatrix}
  a & b \\
  0 & 0
\end{pmatrix})$.
Thus, there exist two maps $\Phi_1,\Phi_2\colon\mathscr A\times\mathscr A\to\eta\mathscr B$ such that
\begin{equation*}
  \Phi(\begin{pmatrix}
    a & b \\
    0 & 0
  \end{pmatrix})=\begin{pmatrix}
    \Phi_1(a,b) & \Phi_2(a,b) \\
    0 & 0
  \end{pmatrix}.
\end{equation*}
Similarly, there are two maps $\Phi_3,\Phi_4\colon\mathscr A\times\mathscr A\to\eta\mathscr B$ such that
\begin{equation*}
  \Phi(\begin{pmatrix}
    0 & 0 \\
    c & d
  \end{pmatrix})=\begin{pmatrix}
    0 & 0 \\
    \Phi_3(c,d) & \Phi_4(c,d)
  \end{pmatrix}.
\end{equation*}
Since $\begin{pmatrix}
  a & b \\
  0 & 0
\end{pmatrix}\perp\begin{pmatrix}
  0 & 0 \\
  c & d
\end{pmatrix}$, it follows from the ortho-additivity of $\Phi$ that
\begin{equation}\label{equ Phi-1234}
  \Phi(\begin{pmatrix}
    a & b \\
    c & d
  \end{pmatrix})=\begin{pmatrix}
    \Phi_1(a,b) & \Phi_2(a,b) \\
    \Phi_3(c,d) & \Phi_4(c,d)
  \end{pmatrix}.
\end{equation}
Since
$\begin{pmatrix}
  c & 0 \\
  -1 & 0
\end{pmatrix}
\perp
\begin{pmatrix}
  a & b \\
  c^*a & c^*b
\end{pmatrix}$ and $\Phi$ is an ortho-additive ortho-preserving map, we have
\begin{equation*}
  \Phi(\begin{pmatrix}
    a+c & b \\
    c^*a-1 & c^*b
  \end{pmatrix})=\Phi(\begin{pmatrix}
    a & b \\
    c^*a & c^*b
  \end{pmatrix})+\Phi(\begin{pmatrix}
    c & 0 \\
    -1 & 0
  \end{pmatrix})\quad\text{and}\quad\Phi(\begin{pmatrix}
    c & 0 \\
    -1 & 0
  \end{pmatrix})\perp\Phi(\begin{pmatrix}
    a & b \\
    c^*a & c^*b
  \end{pmatrix}).
\end{equation*}
Combining with \eqref{equ Phi-1234}, we can get
\begin{equation}\label{equ Phi1-additive}
  \Phi_1(a+c,b)=\Phi_1(a,b)+\Phi_1(c,0)
\end{equation}
and
\begin{equation}\label{equ Phi1-ortho}
  \Phi_1(c,0)^*\Phi_1(a,b)+\Phi_3(-1,0)^*\Phi_3(c^*a,c^*b)=0.
\end{equation}
Since $\Phi(E_{11})=F_{11}$, we have $\Phi_1(1,0)=1$ and $\Phi_1(0,1)=0$.
It follows from $\Phi_1(1,0)=1$ and \eqref{equ Phi1-ortho} that
\begin{equation}\label{equ Phi1-multiplicative}
  \Phi_1(c^*a,c^*b)=\Phi_1(c,0)^*\Phi_1(a,b).
\end{equation}
In particular, $\Phi_1(0,c^*)=\Phi_1(c,0)^*\Phi_1(0,1)=0$, i.e., $\Phi_1(0,b)=0$ for every $b\in\mathscr A$.
By \eqref{equ Phi1-additive}, we can get $\Phi_1(c,b)=\Phi_1(0,b)+\Phi_1(c,0)=\Phi_1(c,0)$.
Thus, we have
\begin{equation}\label{equ ab=a0}
  \Phi_1(a,b)=\Phi_1(a,0)\quad\text{for all}~a,b\in\mathscr A.
\end{equation}
Next, we define a map $f\colon\mathscr A\to\eta\mathscr B$ by $f(a)=\Phi_1(a,0)$.
Then $f(1)=1$, $f(a+b)=f(a)+f(b)$ by \eqref{equ Phi1-additive}, and $f(c^*a)=f(c)^*f(a)$ by \eqref{equ Phi1-multiplicative}.
Therefore, $f$ is a unital *-ring homomorphism from $\mathscr A$ into $\eta\mathscr B$.
By \Cref{lem *ring-homomorphism}, $f$ is a map from $\mathscr A$ into $\mathscr B$ and is the direct sum of a *-homomorphism and a conjugate *-homomorphism.
Moreover, it follows from \eqref{equ ab=a0} that
\begin{equation}\label{equ Phi1=f}
  \Phi_1(a,b)=f(a)\quad\text{for all}~a,b\in\mathscr A.
\end{equation}
Taking $a=-1$, $b=0$, and $c=1$ in \eqref{equ Phi1-ortho}, we can get $|\Phi_3(-1,0)|^2=1$.
Hence $\Phi_3(-1,0)$ is a unitary operator in $\mathscr B$.
Let $u=-\Phi_3(1,0)$.
By \eqref{equ Phi1-ortho}, we obtain that
\begin{equation}\label{equ Phi3=f}
  \Phi_3(a,b)=uf(a)\quad\text{for all}~a,b\in\mathscr A.
\end{equation}
Similarly, there exists a unital *-ring homomorphism $g\colon\mathscr A\to\eta\mathscr B$ and a unitary operator $v$ in $\mathscr B$ such that
\begin{equation*}
  \Phi_4(a,b)=g(b),\quad\Phi_2(a,b)=vg(b)\quad\text{for all}~a,b\in\mathscr A.
\end{equation*}
It follows from \eqref{equ Phi-1234} that
\begin{equation}\label{equ Phi=fg}
  \Phi(\begin{pmatrix}
    a & b \\
    c & d
  \end{pmatrix})=\begin{pmatrix}
    f(a) & vg(b) \\
    uf(c) & g(d)
  \end{pmatrix}.
\end{equation}
Since
$\begin{pmatrix}
  a^* & 0 \\
  -1 & 0
\end{pmatrix}
\perp
\begin{pmatrix}
  0 & 1 \\
  0 & a
\end{pmatrix}$ and $\Phi$ is ortho-preserving, we can get
\begin{equation*}
  vf(a)=u^*g(a).
\end{equation*}
Thus, $v=u^*$ and $f=g$.
Let $\Phi_f=I_2\otimes f$ and
$U=\begin{pmatrix}
  1 & 0 \\
  0 & u
\end{pmatrix}$.
Then $\Phi_f$ is the direct sum of a *-homomorphism and a conjugate *-homomorphism.
Moreover, we can rewrite \eqref{equ Phi=fg} as
\begin{equation*}
  \Phi(\begin{pmatrix}
    a & b \\
    c & d
  \end{pmatrix})=U\Phi_f(\begin{pmatrix}
    a & b \\
    c & d
  \end{pmatrix})U^*.
\end{equation*}
This completes the proof.
\end{proof}

\begin{proposition}\label{prop type-I2}
Suppose $\mathscr M$ and $\mathscr N$ are type $\mathrm{I}_2$ von Neumann algebras, and $\varphi\colon\mathscr M\to\mathscr N$ is an ortho-additive ortho-isomorphism.
Then there exists a map $\Phi\colon\mathscr M\to\mathscr N$ such that
\begin{equation*}
  \varphi(A)=\Phi(A)\varphi(I)\quad\text{for all}~A\in\mathscr M,
\end{equation*}
and $\Phi$ is the direct sum of a *-isomorphism and a conjugate *-isomorphism.
\end{proposition}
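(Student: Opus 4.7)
The plan is to reduce this to Lemma 4.4 by passing to the normalized map $\Phi(A)=\varphi(A)\varphi(I)^{-1}$ and producing a compatible pair of matrix units. Since $\mathscr M$ and $\mathscr N$ are type $\mathrm{I}_2$, they are finite, so by the discussion preceding Lemma 4.3 the element $\varphi(I)^{-1}\in\eta\mathscr N$ is well-defined and $\Phi\colon\mathscr M\to\eta\mathscr N$ is ortho-additive, ortho-preserving, with $\Phi(I)=I$. By Lemma 4.3, $\Phi$ restricts to a projection ortho-isomorphism $\mathscr P(\mathscr M)\to\mathscr P(\mathscr N)$.

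By \cite[Theorem 6.6.5]{KR2}, write $\mathscr M=M_2(\mathscr A)$ and $\mathscr N=M_2(\mathscr B)$ with $\mathscr A,\mathscr B$ abelian, and take $\{E_{ij}\}$ to be the standard matrix units \eqref{equ E-ij} in $\mathscr M$. Set $F_{jj}=\Phi(E_{jj})$ for $j=1,2$. Then $F_{11}\perp F_{22}$ (by ortho-preservation) and $F_{11}+F_{22}=\Phi(E_{11})+\Phi(E_{22})=\Phi(I)=I$ (by ortho-additivity). By Corollary 2.5, $\Phi$ preserves abelian projections and central supports, so $F_{11},F_{22}$ are abelian projections in $\mathscr N$ with central supports $C_{F_{jj}}=\Phi(C_{E_{jj}})=\Phi(I)=I$. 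In a type $\mathrm{I}_2$ von Neumann algebra, any two orthogonal abelian projections with central support $I$ summing to $I$ are equivalent (by the characterization of $Z_n$ via \cite[Proposition 6.4.6]{KR2}). Hence there is a partial isometry $F_{12}\in\mathscr N$ with $F_{12}^*F_{12}=F_{22}$ and $F_{12}F_{12}^*=F_{11}$, giving a full system of matrix units $\{F_{ij}\}$ in $\mathscr N$ with $\Phi(E_{jj})=F_{jj}$.

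Now Lemma 4.4 applies verbatim and yields that $\Phi$ actually takes values in $\mathscr N$ and is the direct sum of a *-homomorphism and a conjugate *-homomorphism. To finish, I verify bijectivity. If $\Phi(A)=0$, then $\varphi(A)=0=\varphi(0)$, and injectivity of $\varphi$ forces $A=0$. Given $B\in\mathscr N$, surjectivity of $\varphi$ produces $A\in\mathscr M$ with $\varphi(A)=B\varphi(I)$, hence $\Phi(A)=B$. Thus $\Phi$ is a bijective map that splits as a direct sum of a *-isomorphism and a conjugate *-isomorphism, and by definition $\varphi(A)=\Phi(A)\varphi(I)$ for every $A\in\mathscr M$.

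The main obstacle is the passage from the abstract projection ortho-isomorphism to a concrete compatibility of matrix units: one must know that $F_{11}$ and $F_{22}$, though a priori only abelian and orthogonal with sum $I$, are equivalent in $\mathscr N$. This is exactly where the preservation of central supports from Corollary 2.5 combined with the internal structure of type $\mathrm{I}_2$ algebras does the job, allowing Lemma 4.4 to take over.
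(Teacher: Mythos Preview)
Your proof is correct and follows essentially the same route as the paper: define $\Phi(A)=\varphi(A)\varphi(I)^{-1}$ via \Cref{lem Phi-on-P}, use \Cref{cor Dye} to see that $\Phi(E_{11}),\Phi(E_{22})$ are orthogonal abelian projections summing to $I$, extend them to a system of matrix units $\{F_{ij}\}$, apply \Cref{lem Phi-E-F}, and check bijectivity. You are slightly more explicit than the paper in justifying why $F_{11}\sim F_{22}$ (the paper simply asserts the existence of the matrix units), but the argument is the same in substance.
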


\begin{proof}
Let $\Phi$ be the map given by \Cref{lem Phi-on-P} and $\{E_{ij}\}_{1\leqslant i,j\leqslant 2}$ a system of matrix units in $\mathscr M$ with $E_{11}+E_{22}=I$.
By \Cref{cor Dye} and \Cref{lem Phi-on-P}, $\Phi(E_{11})$ and $\Phi(E_{22})$ are abelian projections in $\mathscr N$ with sum $I$.
Thus, there exists a system of matrix units $\{F_{ij}\}_{1\leqslant i,j\leqslant 2}$ in $\mathscr N$ such that $F_{jj}=\Phi(E_{jj})$ for $j=1,2$.
Then $\Phi$ is a map from $\mathscr M$ into $\mathscr N$ and is the direct sum of a *-homomorphism and a conjugate *-homomorphism by \Cref{lem Phi-E-F}.
Since $\varphi$ is injective, $\Phi$ is also injective.
Since $\varphi$ is surjective, for any $B\in\mathscr N$, there exists $A\in\mathscr M$ such that $\varphi(A)=B\varphi(I)$, i.e., $\Phi(A)=B$.
It follows that $\Phi$ is surjective.
This completes the proof.
\end{proof}

\subsection{Ortho-additive ortho-isomorphisms}\label{subsec ortho-isomorphism}
For type $\mathrm{I}_2$ von Neumann algebras, we have obtained the structure of ortho-additive ortho-isomorphisms in \Cref{prop type-I2}.
By \Cref{eg strange}, we cannot expect to have a similar result for type $\mathrm{I}_1$ von Neumann algebras, i.e., abelian von Neumann algebras.
In the next proposition, we consider von Neumann algebras without direct summands of type $\mathrm{I}_1$ and type $\mathrm{I}_2$.

\begin{proposition}\label{prop no-type-I2}
Suppose $\mathscr M$ and $\mathscr N$ are von Neumann algebras such that $\mathscr M$ has no direct summands of type $\mathrm{I}_1$ and type $\mathrm{I}_2$, and $\varphi\colon\mathscr M\to\mathscr N$ is an ortho-additive ortho-isomorphism.
Then there exists a map $\Phi\colon\mathscr M\to\mathscr N$ such that
\begin{equation*}
  \varphi(A)=\Phi(A)\varphi(I)\quad\text{for all}~A\in\mathscr M,
\end{equation*}
and $\Phi$ is the direct sum of a *-isomorphism and a conjugate *-isomorphism.
\end{proposition}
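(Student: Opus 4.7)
The strategy is to reduce to \Cref{thm main1} by precomposing $\varphi$ with the inverse of the canonical map supplied by \Cref{prop ortho-isomorphism}. First I would apply \Cref{prop ortho-isomorphism} (which is available since $\mathscr M$ has no direct summand of type $\mathrm{I}_2$) to obtain $\Phi\colon\mathscr M\to\mathscr N$, the direct sum of a $*$-isomorphism and a conjugate $*$-isomorphism, satisfying $\Phi(\mathcal R(A))=\mathcal R(\Phi(A))=\mathcal R(\varphi(A))$ for every $A\in\mathscr M$. Since both summands of $\Phi$ are multiplicative and preserve adjoints (the conjugate factor being conjugate-linear rather than linear), a short check across the central decomposition shows that $\Phi$ itself is additive, multiplicative, preserves adjoints, and is a bijection; the same is therefore true of $\Phi^{-1}\colon\mathscr N\to\mathscr M$.

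Next I would define $\psi=\Phi^{-1}\circ\varphi\colon\mathscr M\to\mathscr M$ and verify that $\psi$ is an ortho-additive range-contractive map. Ortho-additivity is immediate: if $A\perp B$, then $\varphi(A+B)=\varphi(A)+\varphi(B)$ with $\varphi(A)\perp\varphi(B)$, and the additivity of $\Phi^{-1}$ gives $\psi(A+B)=\psi(A)+\psi(B)$. For range-contractivity I would first observe that because $\Phi$ is multiplicative and preserves adjoints, it preserves range projections; setting $T=\Phi^{-1}(B)$ in $\mathcal R(\Phi(T))=\Phi(\mathcal R(T))$ and applying $\Phi^{-1}$ yields $\mathcal R(\Phi^{-1}(B))=\Phi^{-1}(\mathcal R(B))$ for every $B\in\mathscr N$. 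Combining this with the identity $\Phi(\mathcal R(A))=\mathcal R(\varphi(A))$ from \Cref{prop ortho-isomorphism} gives
\begin{equation*}
  \mathcal R(\psi(A))=\Phi^{-1}(\mathcal R(\varphi(A)))=\Phi^{-1}(\Phi(\mathcal R(A)))=\mathcal R(A),
\end{equation*}
so $\psi$ is in fact range-preserving, and in particular range-contractive.

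Since $\mathscr M$ has no direct summand of type $\mathrm{I}_1$, \Cref{thm main1} applies to $\psi$ and yields $\psi(A)=A\psi(I)$ for every $A\in\mathscr M$. Applying $\Phi$ to both sides and invoking multiplicativity gives
\begin{equation*}
  \varphi(A)=\Phi(\psi(A))=\Phi(A\psi(I))=\Phi(A)\Phi(\psi(I))=\Phi(A)\varphi(I),
\end{equation*}
where the last equality uses $\Phi(\psi(I))=\Phi(\Phi^{-1}(\varphi(I)))=\varphi(I)$. This is exactly the claimed formula, and $\Phi$ already has the required structural form.

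The main technical point that needs care is the verification that $\Phi$ (and hence $\Phi^{-1}$) is genuinely multiplicative and preserves range projections across the direct sum decomposition $\mathscr M=\mathscr M_1\oplus\mathscr M_2$ on which $\Phi$ splits into a $*$-isomorphism and a conjugate $*$-isomorphism. The cross terms $\Phi(A_1)\Phi(B_2)$ for $A_1\in\mathscr M_1$, $B_2\in\mathscr M_2$ must vanish because their images lie in the orthogonal central summands of $\mathscr N$, and this is what allows the multiplicativity of each piece to combine into multiplicativity of $\Phi$. Beyond this bookkeeping, everything else is a direct application of the already-established \Cref{prop ortho-isomorphism} and \Cref{thm main1}.
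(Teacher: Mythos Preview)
Your proposal is correct and follows essentially the same route as the paper: invoke \Cref{prop ortho-isomorphism} to produce $\Phi$, set $\psi=\Phi^{-1}\circ\varphi$, check that $\psi$ is ortho-additive and range-contractive (indeed range-preserving) via $\mathcal R(\psi(A))=\mathcal R(A)$, and then apply \Cref{thm main1}. The paper's proof is slightly terser---it writes the range computation as $\Phi(\mathcal R(\psi(A)))=\mathcal R(\Phi\circ\psi(A))=\mathcal R(\varphi(A))=\Phi(\mathcal R(A))$ and leaves the multiplicativity of $\Phi$ implicit in the final step---but the logic is identical to yours.
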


\begin{proof}
Let $\Phi\colon\mathscr M\to\mathscr N$ be the map given by \Cref{prop ortho-isomorphism} and we define a map $\psi$ on $\mathscr M$ by
\begin{equation*}
  \psi=\Phi^{-1}\circ\varphi\colon\mathscr M\to\mathscr M.
\end{equation*}
It is clear that $\psi$ is ortho-additive.
Furthermore, for any $A\in\mathscr M$, we have
\begin{equation*}
  \Phi(\mathcal{R}(\psi(A)))=\mathcal{R}(\Phi\circ\psi(A))
  =\mathcal{R}(\varphi(A))=\Phi(\mathcal{R}(A)),
\end{equation*}
i.e., $\mathcal{R}(\psi(A))=\mathcal{R}(A)$.
Thus, $\psi$ is range-contractive.
By \Cref{thm main1}, for any $A\in\mathscr M$, we have
\begin{equation*}
  \Phi^{-1}\circ\varphi(A)=\psi(A)=A\psi(I).
\end{equation*}
It follows that $\varphi(A)=\Phi(A)\varphi(I)$.
This completes the proof.
\end{proof}

The next theorem is the main result in this section.

\begin{theorem}\label{thm main2}
Suppose $\mathscr M$ and $\mathscr N$ are von Neumann algebras such that $\mathscr M$ has no direct summand of type $\mathrm{I}_1$, and $\varphi\colon\mathscr M\to\mathscr N$ is an ortho-additive ortho-isomorphism.
Then there exists a map $\Phi\colon\mathscr M\to\mathscr N$ such that
\begin{equation*}
  \varphi(A)=\Phi(A)\varphi(I)\quad\text{for all}~A\in\mathscr M,
\end{equation*}
and $\Phi$ is the direct sum of a *-isomorphism and a conjugate *-isomorphism.
\end{theorem}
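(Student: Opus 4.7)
The plan is to glue Propositions \ref{prop type-I2} and \ref{prop no-type-I2} together using the type decomposition. Since $\mathscr M$ has no direct summand of type $\mathrm{I}_1$, the type decomposition theorem yields a central projection $Z\in\mathscr M$ such that $\mathscr MZ$ is of type $\mathrm{I}_2$ (or zero) and $\mathscr M(I-Z)$ has no direct summand of type $\mathrm{I}_1$ or type $\mathrm{I}_2$. The strategy is to show that $\varphi$ respects this decomposition, apply the appropriate previous result to each summand, and reassemble.

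To split the codomain, first invoke \Cref{lem theta} to obtain the induced projection ortho-isomorphism $\theta\colon\mathscr P(\mathscr M)\to\mathscr P(\mathscr N)$ with $\theta(\mathcal R(A))=\mathcal R(\varphi(A))$. By \Cref{cor Dye}, $\theta$ preserves centrality and carries the maximal central projection of type $\mathrm{I}_2$ to its counterpart; set $Z'=\theta(Z)$, so that $Z'$ is central in $\mathscr N$ and $\mathscr NZ'$ is of type $\mathrm{I}_2$ (or zero).

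Next I would show that $\varphi$ maps $\mathscr MZ$ into $\mathscr NZ'$ and $\mathscr M(I-Z)$ into $\mathscr N(I-Z')$. For $A\in\mathscr MZ$ we have $\mathcal R(A)\leqslant Z$, so order-preservation (\Cref{lem Dye}) gives $\mathcal R(\varphi(A))=\theta(\mathcal R(A))\leqslant Z'$, and centrality of $Z'$ forces $\varphi(A)\in\mathscr NZ'$. The symmetric argument applied to $\varphi^{-1}$ shows that the two restrictions $\varphi_1\colon\mathscr MZ\to\mathscr NZ'$ and $\varphi_2\colon\mathscr M(I-Z)\to\mathscr N(I-Z')$ are themselves ortho-additive ortho-isomorphisms. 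Applying \Cref{prop type-I2} to $\varphi_1$ and \Cref{prop no-type-I2} to $\varphi_2$ then produces maps $\Phi_1$ and $\Phi_2$, each a direct sum of a *-isomorphism and a conjugate *-isomorphism, satisfying $\varphi(A)=\Phi_1(A)\varphi(Z)$ for $A\in\mathscr MZ$ and $\varphi(A)=\Phi_2(A)\varphi(I-Z)$ for $A\in\mathscr M(I-Z)$.

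Finally, define $\Phi\colon\mathscr M\to\mathscr N$ by $\Phi(A)=\Phi_1(ZA)+\Phi_2((I-Z)A)$. Since $Z\perp I-Z$, ortho-additivity gives $\varphi(I)=\varphi(Z)+\varphi(I-Z)$ and $\varphi(A)=\varphi(ZA)+\varphi((I-Z)A)$. The cross terms $\Phi_1(ZA)\varphi(I-Z)$ and $\Phi_2((I-Z)A)\varphi(Z)$ vanish because $\Phi_1(ZA)$ lies in the central ideal $\mathscr NZ'$ while $\varphi(I-Z)\in\mathscr N(I-Z')$ (and vice versa), so $\Phi(A)\varphi(I)=\Phi_1(ZA)\varphi(Z)+\Phi_2((I-Z)A)\varphi(I-Z)=\varphi(A)$. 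The global $\Phi$ is the direct sum of $\Phi_1$ and $\Phi_2$, hence remains the direct sum of a *-isomorphism and a conjugate *-isomorphism. The main obstacle is verifying the splitting, and this rests on the centrality of $Z$ and $Z'$ combined with the order-preservation properties supplied by \Cref{lem Dye} and \Cref{cor Dye}; once the splitting is in hand, assembly is a short bookkeeping computation.
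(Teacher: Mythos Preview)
Your proposal is correct and follows essentially the same approach as the paper: split off the type $\mathrm{I}_2$ summand via the type decomposition, use \Cref{lem theta} and \Cref{cor Dye} to see that $\varphi$ respects this splitting, apply \Cref{prop type-I2} and \Cref{prop no-type-I2} to the two pieces, and take the direct sum. You are in fact slightly more explicit than the paper about why $\varphi(\mathscr MZ)\subseteq\mathscr NZ'$ and why the cross terms $\Phi_1(ZA)\varphi(I-Z)$ vanish, but the underlying argument is identical.
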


\begin{proof}
By the type decomposition theorem, there exists a maximal central projection $Z_2$ in $\mathscr M$ such that $\mathscr MZ_2$ is of type $\mathrm{I}_2$ or $Z_2=0$.
Let $Z_1=I-Z_2$.
Then $\mathscr MZ_1$ has no direct summands of type $\mathrm{I}_1$ and type $\mathrm{I}_2$ by assumption.
Let $\theta$ be the map given by \Cref{lem theta}.
By \Cref{cor Dye}, $\theta(Z_2)$ is the maximal central projection in $\mathscr N$ such that $\mathscr N\theta(Z_2)$ is of type $\mathrm{I}_2$ or $\theta(Z_2)=0$.
For $j=1,2$, let $\mathscr M_j=\mathscr MZ_j$ and $\mathscr N_j=\mathscr N\theta(Z_j)$.
Then
\begin{equation*}
  \mathscr M=\mathscr M_1\oplus\mathscr M_2\quad\text{and}\quad
  \mathscr N=\mathscr N_1\oplus\mathscr N_2.
\end{equation*}
By \Cref{lem theta}, we have $\varphi(\mathscr M_j)=\mathscr N_j$.
Let $\varphi_j\colon\mathscr M_j\to\mathscr N_j$ be the restriction of $\varphi$ on $\mathscr M_j$.
Then $\varphi_j$ is an ortho-additive ortho-isomorphism.
By \Cref{prop type-I2} and \Cref{prop no-type-I2}, there exists a map $\Phi_j\colon\mathscr M_j\to\mathscr N_j$ such that
\begin{equation*}
  \varphi_j(A)=\Phi_j(A)\varphi_j(Z_j)\quad\text{for all}~A\in\mathscr M_j,
\end{equation*}
and $\Phi_j$ is the direct sum of a *-isomorphism and a conjugate *-isomorphism.
Let $\Phi=\Phi_1\oplus\Phi_2$.
This completes the proof.
\end{proof}

\begin{remark}\label{rem main2}
Similar to \Cref{rem main1}, it can be shown that $\varphi(I)$ is invertible in \Cref{thm main2}.
Actually, there exists $A_0\in\mathscr M$ such that $\varphi(A_0)=I$, i.e., $\Phi(A_0)\varphi(I)=I$.
Let $B_0=\varphi(I)\Phi(A_0)$.
Then
\begin{equation*}
  (\varphi\circ\Phi^{-1})(B_0-I)=(B_0-I)\varphi(I)
  =\varphi(I)\Phi(A_0)\varphi(I)-\varphi(I)=0.
\end{equation*}
It follows that $B_0-I=0$, i.e., $\varphi(I)\Phi(A_0)=I$.
Therefore, $\varphi(I)$ is invertible.
\end{remark}


\end{document}